\newtheorem{theorem}{Theorem}[section]
\newtheorem{remark}{Remark}[section]
\newtheorem{defn}{Definition}[section]
\newtheorem{lemma}{Lemma}[section]
\let\p=\perp
\let\pa=\partial
\let\f=\frac
\let\D=\Delta
\let\r=\rho
\let\vr=\varrho
\def\Om{\Omega}
\def\na{\nabla}
\def\cb{{\mathcal B}}
\def\D{{\mathcal D}}
\def\E{{\mathcal E}}
\def\cm{{\mathcal M}}
\renewcommand{\div}{{\rm div}}
\DeclareMathOperator{\divv}{div}
\DeclareMathOperator{\loc}{loc}
\DeclareMathOperator{\curl}{curl}
\title{Global stability for compressible isentropic Navier--Stokes equations in 3D bounded domains with Navier-slip boundary conditions
\thanks{Liu's research was partially supported by Natural Science Foundation of Changchun Normal University (No. CSJJ2024003GZR). Wu's research was partially supported by Fujian Alliance of Mathematics (No. 2023SXLMMS08). Zhong's research was partially supported by Fundamental Research Funds for the Central Universities (No. SWU--KU24001) and National Natural Science Foundation of China (No. 12371227).}
}
\author{Yang Liu$\,^{\rm 1}\,$,\ Guochun Wu$\,^{\rm 2}\,$,\
Xin Zhong$\,^{\rm 3}\,$ {\thanks{E-mail addresses:
liuyang0405@ccsfu.edu.cn (Y. Liu), guochunwu@126.com (G. Wu),  xzhong1014@amss.ac.cn (X. Zhong).}}\date{}\\
\footnotesize $^{\rm 1}\,$
College of Mathematics, Research Institute for Scientific and Technological Innovation,
\\ \footnotesize Changchun Normal
University, Changchun 130032, P. R. China\\
	\footnotesize $^{\rm 2}\,$
   School of Mathematics and Statistics, Xiamen University of Technology, Xiamen 361024, P. R. China\\
	\footnotesize $^{\rm 3}\,$ School of Mathematics and Statistics, Southwest University, Chongqing 400715, P. R. China}
\begin{document}
\maketitle

\begin{abstract}
We investigate the global stability of large solutions to the compressible isentropic Navier--Stokes equations in a three-dimensional (3D) bounded domain with Navier-slip boundary conditions. It is shown that the strong solutions converge to an equilibrium state exponentially in the $L^2$-norm provided the density is essentially uniform-in-time bounded from above. Moreover, we obtain that the density converges to its equilibrium state exponentially in the $L^\infty$-norm if additionally the initial density is bounded away from zero. Furthermore, we derive that the vacuum states will not vanish for any time provided vacuum appears (even at a point) initially. This is the first result concerning the global stability for large strong solutions of compressible Navier--Stokes equations with vacuum in 3D general bounded domains.
\end{abstract}

\textit{Key words and phrases}. Compressible Navier--Stokes equations;
global stability; Navier-slip boundary conditions.

2020 \textit{Mathematics Subject Classification}. 76N06; 76N10.


\section{Introduction and main results}
The motion of a viscous compressible isentropic flow in a domain $\Omega\subset\mathbb{R}^3$ is governed by the compressible Navier--Stokes equations
\begin{align}\label{a1}
\begin{cases}
\rho_t+\divv (\rho u)=0,\\
(\rho u)_t+\divv (\rho u\otimes u)+\nabla P=\mu\Delta u+(\mu+\lambda)\nabla\divv  u,
\end{cases}
\end{align}
where the unknowns $\rho$, $u=(u^1, u^2, u^3)$, and $P=a\rho^\gamma\ (a>0,\gamma>1)$ stand for the density, velocity, and pressure, respectively. The constants $\mu$ and $\lambda$ represent the shear viscosity and the bulk viscosity respectively satisfying the physical restrictions
\begin{align*}
\mu>0,\quad 2\mu+3\lambda\ge 0,
\end{align*}
Without loss of generality, we assume that $a=1$ throughout the paper.

Let $\Omega\subset\mathbb{R}^3$ be a simply connected smooth bounded domain,
we consider \eqref{a1} with the initial data
\begin{align}\label{1.2}
\rho(x, 0)=\rho_0(x),\ \rho u(x, 0)=\rho_0u_0(x), \quad x\in \Omega,
\end{align}
and slip boundary conditions
\begin{align}\label{a4}
u\cdot n=0, \ \curl u\times n=0,\quad & x\in\partial\Omega,\ t>0,
\end{align}
where $n$ is the unit outward normal vector to $\pa\Omega$.

Significant progress has been made over the past decades regarding the global existence of solutions to the multi-dimensional compressible isentropic Navier--Stokes equations. Among them, Matsumura and Nishida \cite{MN80,MN83} proved the global well-posedness of classical solutions provided the initial data are close to a non-vacuum equilibrium in $H^3$. Hoff \cite{H95,H952} derived global weak solutions with small and discontinuous data. The major breakthrough on the global solutions with large initial data is due to Lions \cite{L98}, where he employed the renormalization skills to establish global weak solutions to \eqref{a1} in $\mathbb{R}^n$ for $\gamma\geq\frac{3n}{n+2}\ (n=2,3)$. Feireisl--Novotn\'y--Petzeltov\'a \cite{FNP01} later extended Lions' result to the case $\gamma>\frac{n}{2}$ by introducing oscillation defect measure. At the same time, Jiang and Zhang \cite{JZ01,JZ03} obtained global weak solutions for any $\gamma>1$ when the initial data are assumed to have some spherically symmetric or axisymmetric properties. Nonetheless, due to the possible concentration of finite kinetic energy \cite{H21}, it still seems to be a challenge to show the global existence of weak solutions with general 3D data for $\gamma\in (1, \f32]$. Meanwhile, several results are devoted to investigating the global well-posedness of strong (or classical) solutions to the compressible isentropic Navier--Stokes equations with vacuum both at far field and interior region. Huang--Li--Xin \cite{HLX12} established the global existence and uniqueness of classical solutions for the 3D Cauchy problem with smooth initial data which are of small total energy but possibly large oscillations, where the far field density could be either vacuum or non-vacuum. Using some key \textit{a priori} decay with rates (in large time) and a spatially weighted energy method, Li and Xin \cite{LX19} soon after analyzed the 2D Cauchy problem with far-field vacuum provided the initial total energy is suitably small. A central aspect of their analysis in \cite{HLX12,LX19} is the derivation of a time-independent upper bound of the density. Hong--Hou--Peng--Zhu \cite{HHPZ} generalized the main result in \cite{HLX12} to the case of large initial total energy when the adiabatic exponent $\gamma$ is sufficiently close to $1$. Cai and Li \cite{CL23} investigated the global existence of classical solutions in general 3D bounded domains with Navier-slip boundary conditions provided the initial energy is properly small. Let us mention that in the last two decades there were also some interesting results concerning the global well-posedness of compressible isentropic Navier--Stokes equations in Besov spaces, see for instance \cite{CD10,CMZ10,D00,FZZ18,H11,ZLZ20,HHW19}, to cite but a few.

It is also of great interest to study the global stability and large-time behavior of solutions to the compressible Navier--Stokes system. The readers can refer to \cite{MN80,MN83,DX17,DUYZ07,KK02,KK05} for large-time behavior of global solutions with the initial data being a small perturbation of constant equilibrium states, while long-time behavior of Hoff's intermediate weak solutions with discontinuous and small initial data can be found in \cite{HW20,S21}. Naturally, one may wonder how about the large-time behavior of solutions with large initial data. Under the assumptions that the density is essentially bounded and has uniform-in-time positive lower bound, Padula \cite{Padula} proved that weak solutions of \eqref{a1} decay exponentially to the equilibrium state in $L^2$-norm provided the density is essentially bounded from above and below. Such the assumption was later removed in \cite{PSW21} by means of the Bogovskii operator and a suitable Lyapunov functional. Under the hypothesis that the density $\rho(x,t)$ verifies $\inf_{x\in\mathbb R^3}\rho_0(x)\ge c_0>0$ and $\sup_{t\ge0}\|\rho(t)\|_{C^\alpha}\le M$ with arbitrarily small $\alpha$, He--Huang--Wang \cite{HHW19} established the convergence of the large strong solution $(\rho,u)$ to its associated equilibrium state $(1,0)$ with an explicit decay rate being the same as that of the heat equation, more precisely, they obtained
\begin{align*}
\|\rho-1\|_{H^1}+\|u\|_{H^1}\le C(1+t)^{-\f34(\f2p-1)},
\end{align*}
where $(\rho_0-1, u_0)\in L^p(\mathbb R^3)\cap H^2(\mathbb R^3)$ with $p\in [1, 2)$. Shortly thereafter, Gao--Wei--Yao \cite{GWY20} shown that the upper decay rate of the first order spatial derivative converges to zero in $H^1$-norm at the rate $(1+t)^{-\f32(\f1p-\f12)-\f12}$. More recently, Wu and Zhong \cite{WZ24} weakened the assumption $\sup_{t\ge0}\|\rho(t)\|_{C^\alpha}\le M$ in \cite{HHW19} to the {\it bounded density} by employing some new ideas. Meanwhile, Wu--Yao--Zhang \cite{WLZ} investigated the global stability of large strong solutions for \eqref{a1} in the torus $\mathbb{T}^3$ and proved exponential convergence (for large time) in various norms to the reference constant solution. It should be noted that all the results \cite{WLZ,HHW19,GWY20,WZ24} are devoted to the Cauchy problem in $\mathbb R^3$ or $\mathbb T^3$, and harmonic analysis techniques play key roles more or less.

Selecting appropriate boundary conditions for the system \eqref{a1} in a domain with boundaries is a complex task. A common choice is the homogeneous Dirichlet boundary condition $u = 0$ on $\partial \Omega$, known as the no-slip condition, which models a real fluid adhering to a fixed wall $\partial \Omega$, as formulated by Stokes. However, for rough boundaries, the no-slip condition becomes invalid (cf. \cite{serrin1959mathematical}). Navier proposed an alternative, the Navier-type slip condition, given by
$$
u \cdot n=0,\quad (2 D(u) n+\vartheta u)_{t a n}=0 \text { on } \partial \Omega,
$$
where $D(u)=\left(\nabla u+(\nabla u)^{\operatorname{tr}}\right) / 2$ is the shear stress tensor, $\vartheta$ is a scalar friction function, and $v_{\tan}$ denotes the tangential projection of a vector $v$ onto $\partial \Omega$. This condition accounts for a stagnant fluid layer near the wall, allowing slip with a velocity proportional to the shear stress. Such boundary conditions arise from various physical effects, including free capillary boundaries, rough boundaries, perforated boundaries, and exterior electric fields (see, e.g., \cite{APV98,BJ67,B01,JM01}). The Navier-type slip boundary condition is widely utilized in various problems, including free surface flows, turbulence modeling, and inviscid limits (see, e.g., \cite{S82,GL00,B10}).
Then a natural question, but of importance and interest, is that can we show the global stability and large-time behavior of global large solutions to the system \eqref{a1} under {\it Navier slip boundary conditions \eqref{a4}}? The main goal of the present paper is to give an affirmative response to this problem.

Before stating our main result precisely, we describe the notation throughout.
The symbol $\Box$ denotes the end of a proof and $A\triangleq B$ means $A=B$ by definition.
We denote by $\dot{f}\triangleq f_t+u\cdot\nabla f$ the material derivative of $f$ and
\begin{align*}
\int fdx\triangleq\int_\Omega fdx, \quad\bar f\triangleq\frac{1}{|\Omega|}\int_\Omega fdx.
\end{align*}
For $1\le p\le \infty$ and $k\ge 1$, the standard Sobolev spaces are defined as follows
\begin{gather*}
L^p=L^p(\Omega), \ W^{k, p}=W^{k, p}(\Omega), \ H^k=W^{k, 2},\\
\tilde H^1 \triangleq\left\{v\in H^{1}(\Omega)| (v\cdot n)|_{\partial\Omega}=(\curl v\times n)|_{\partial\Omega}=0\right\}.
\end{gather*}
Moreover, we write the initial total energy of \eqref{a1} by
\begin{align}
C_0\triangleq\int\Big(\f12\rho_0|u_0|^2+G(\rho_0)\Big)dx,
\end{align}
with
\begin{align}\label{a7}
G(\rho)\triangleq\rho\int_{\bar\rho}^\rho\f{P(s)-P(\bar\rho)}{s^2}ds.
\end{align}
Now we define precisely what we mean by strong solutions to \eqref{a1}--\eqref{a4}.
\begin{defn}[Strong solutions]
For $T>0$, $(\rho, u)$ is called a strong solution to the problem \eqref{a1}--\eqref{a4} on $\Omega\times [0, T]$ if for some $q\in (3, 6]$,
\begin{align}\label{a8}
\begin{cases}
0\le \rho\in C([0, T]; W^{1, q}),\ \rho_t\in C([0, T]; L^{q}),\\
u\in C([0, T]; H^2)\cap L^2(0, T; W^{2, q}),\\
\sqrt{\rho}u_t\in L^\infty(0, T; L^2), \ u_t\in L^2(0, T; H^1),
\end{cases}
\end{align}
and $(\rho, u)$ satisfies \eqref{a1} a.e. in $\Omega\times[0, T]$.
\end{defn}

Our first main result can be stated as follows.
\begin{theorem}\label{thm1}
Assume that the initial data $(\rho_0\ge 0, u_0)$ satisfies
\begin{align}
K\triangleq\|\rho_0-\bar\rho_0\|_{L^2}
+\|\sqrt{\rho_0}u_0\|_{L^2}+\|\nabla u_0\|_{H^1}<\infty.
\end{align}
Let $(\rho, u)$ be a global strong solution to the problem \eqref{a1}--\eqref{a4} verifying that
\begin{align}\label{a9}
\sup_{t\ge 0}\|\rho(\cdot, t)\|_{L^\infty}\le \hat\rho,
\end{align}
for some positive constant $\hat\rho$. Then there exist some positive constants $C_1$ and $\eta_1$,
which are dependent on $\hat\rho$ and $K$, but independent of $t$, such that
\begin{align}\label{1.8}
\|(\rho-\bar\rho_0,\sqrt{\rho}u, \nabla u,\sqrt{\rho}\dot{u})(\cdot, t)\|_{L^2}\le C_1e^{-\eta_1t}.
\end{align}
If additionally $\inf\limits_{x\in\Omega}\rho_0(x)\ge \rho_*>0$, then
there exist some positive constants $C_2$ and $\eta_2$,
which are dependent on $\rho_*$, $\hat\rho$, and $K$, but independent of $t$, such that
\begin{align}\label{b1.11}
\|\rho-\bar\rho_0\|_{L^\infty}\le C_2e^{-\eta_2t}.
\end{align}
\end{theorem}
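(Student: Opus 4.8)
The plan is to combine the classical energy hierarchy for \eqref{a1} — the basic energy, the Hoff-type estimate obtained by testing $\eqref{a1}_2$ with $\dot u$, and the estimate for $\int\rho|\dot u|^2\,dx$ — with a Bogovskii-operator argument that recovers coercivity in $\rho-\bar\rho_0$, exploiting \eqref{a9} to keep all constants dependent only on $\hat\rho$ and $K$. A few preliminaries enter throughout: conservation of mass forces $\bar\rho(t)\equiv\bar\rho_0$, so the equilibrium density is $\bar\rho_0$; since $\gamma>1$ and $0\le\rho\le\hat\rho$ one has $c^{-1}|\rho-\bar\rho_0|^2\le G(\rho)\le c|\rho-\bar\rho_0|^2$ and $(P(\rho)-P(\bar\rho_0))(\rho-\bar\rho_0)\ge c|\rho-\bar\rho_0|^2$; on the simply connected domain $\Omega$ one has $\|\nabla v\|_{L^2}\le C(\|\curl v\|_{L^2}+\|\divv v\|_{L^2})$ and $\|v\|_{L^2}\le C\|\nabla v\|_{L^2}$ for $v\in\tilde H^1$; and the effective viscous flux $F\triangleq(2\mu+\lambda)\divv u-(P-\bar P)$ and vorticity $\om\triangleq\curl u$ satisfy $\rho\dot u=\nabla F-\mu\curl\om$, $\Delta F=\divv(\rho\dot u)$ and $\mu\Delta\om=\curl(\rho\dot u)$, with the boundary conditions induced by \eqref{a4}.

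\textbf{Step 1 (decay at an algebraic rate).} Testing $\eqref{a1}_2$ with $u$ and using \eqref{a4} to discard boundary terms gives $\frac{d}{dt}E_0(t)+\mu\|\om\|_{L^2}^2+(2\mu+\lambda)\|\divv u\|_{L^2}^2=0$, where $E_0(t)\triangleq\int(\frac12\rho|u|^2+G(\rho))\,dx$; hence $E_0$ is nonincreasing and $\int_0^\infty\|\nabla u\|_{L^2}^2\,dt\le CE_0(0)\le C(\hat\rho,K)$. Letting $w\triangleq\cb(\rho-\bar\rho_0)$ be the Bogovskii potential ($\divv w=\rho-\bar\rho_0$, $w|_{\partial\Omega}=0$, $\|w\|_{H^1}\le C\|\rho-\bar\rho_0\|_{L^2}$) and testing $\eqref{a1}_2$ with $w$, integrating by parts and using the lower bound on $(P-\bar P)(\rho-\bar\rho_0)$, one obtains after time integration $\int_0^\infty\|\rho-\bar\rho_0\|_{L^2}^2\,dt\le C(\hat\rho,K)$ — once the bounds of Step 2 are available to absorb the nonlinear terms. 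Thus $\int_0^\infty E_0(t)\,dt\le C(\hat\rho,K)$ and, by monotonicity, $E_0(t)\le C(\hat\rho,K)/t\to0$.

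\textbf{Step 2 (uniform higher-order bounds; the main obstacle).} Testing $\eqref{a1}_2$ with $\dot u$ — the Navier-slip boundary integrals being controlled through the curvature of $\partial\Omega$ — yields $\frac{d}{dt}\Psi+c\|\sqrt\rho\dot u\|_{L^2}^2\le g(t)$ for $\Psi(t)\triangleq\frac\mu2\|\om\|_{L^2}^2+\frac{2\mu+\lambda}2\|\divv u\|_{L^2}^2-\int(P-\bar P)\divv u\,dx\sim\|\nabla u\|_{L^2}^2-\|\rho-\bar\rho_0\|_{L^2}^2$, where $g$ is estimated entirely through $F$, $\om$ and \eqref{a9} (for instance $\|\nabla u\|_{L^4}^4\le C\|\nabla u\|_{L^2}(\|\sqrt\rho\dot u\|_{L^2}^3+\|\nabla u\|_{L^2}^3+\|\rho-\bar\rho_0\|_{L^6}^3)$ with $\|\rho-\bar\rho_0\|_{L^6}\le C(\hat\rho)\|\rho-\bar\rho_0\|_{L^2}^{1/3}$) and satisfies $\int_0^\infty g\,dt<\infty$; a companion inequality for $\int\rho|\dot u|^2\,dx$ follows from the equation satisfied by $\dot u$. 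Because $\|\nabla\rho_0\|$ is \emph{not} part of $K$, one cannot bound $\|\nabla^2u\|_{L^2}$ through $\|\nabla P\|_{L^2}$ near $t=0$; instead, where a genuine smallness is needed to absorb supercritical terms such as $\|\nabla u\|_{L^2}\|\sqrt\rho\dot u\|_{L^2}^3$, one uses that $\Psi(t)$ (hence $\|\nabla u(t)\|_{L^2}$) and $E_0(t)$ tend to $0$ — which follows from Step 1 together with $\int_0^\infty\|\sqrt\rho\dot u\|_{L^2}^2\,dt<\infty$ — so that on $[T_0,\infty)$ the relevant quantities are small, while on $[0,T_0]$ a time-weighted version with weight $\s(t)\triangleq\min\{1,t\}$ at the $\dot u$ and $\nabla\dot u$ levels keeps them $\le C(\hat\rho,K)$; iterating gives $\sup_{t\ge0}(\|\nabla u\|_{L^2}^2+\s\|\sqrt\rho\dot u\|_{L^2}^2)+\int_0^\infty(\|\sqrt\rho\dot u\|_{L^2}^2+\s\|\nabla\dot u\|_{L^2}^2)\,dt\le C(\hat\rho,K)$. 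I expect this step to be the hardest: the Navier-slip conditions force delicate boundary terms and nonstandard boundary conditions for $F$ and $\om$, and the exclusion of $\nabla\rho_0$ from $K$ means the whole scheme must be arranged so that the supercritical nonlinearities are absorbed either by the weight $\s$ or by the smallness of $E_0$ and $\|\nabla u\|_{L^2}$ for large time, all constants staying dependent on $\hat\rho,K$ only.

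\textbf{Step 3 (exponential decay and the $L^\infty$-bound).} For $t\ge T_0$ with $E_0(T_0)+\|\nabla u(T_0)\|_{L^2}^2$ small, set $\mathcal L(t)\triangleq\Psi(t)+\beta E_0(t)-\delta\int\rho u\cdot w\,dx$ for suitable small $\beta,\delta>0$; by Steps 1--2, $\mathcal L\sim\|\nabla u\|_{L^2}^2+\|\rho-\bar\rho_0\|_{L^2}^2+\|\sqrt\rho u\|_{L^2}^2$, and $\frac{d}{dt}\big(-\delta\int\rho u\cdot w\big)$ produces, among absorbable terms, $-\delta\int(P-\bar P)(\rho-\bar\rho_0)\,dx\le-c\delta\|\rho-\bar\rho_0\|_{L^2}^2$, supplying the coercivity missing from the basic dissipation, so that $\frac{d}{dt}\mathcal L+\eta_1\mathcal L\le0$ on $[T_0,\infty)$. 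Grönwall then gives exponential decay of $\|\nabla u\|_{L^2}$, $\|\rho-\bar\rho_0\|_{L^2}$ and $\|\sqrt\rho u\|_{L^2}$; feeding this into the (now unweighted, for $t\ge1$) inequality for $\int\rho|\dot u|^2\,dx$ gives the exponential decay of $\|\sqrt\rho\dot u\|_{L^2}$, and since on $[0,T_0]$ all these quantities are $\le C(\hat\rho,K)$, \eqref{1.8} follows after enlarging $C_1$. For \eqref{b1.11}, assume $\rho_0\ge\rho_*>0$: using the transport equation $\rho_t+u\cdot\nabla\rho=-\frac{\rho}{2\mu+\lambda}(F+P-\bar P)$ and the integrability in time of $\|\divv u\|_{L^\infty}$ — which follows from the higher-order bounds of Step 2 and the exponential decay just obtained — one gets $\rho(x,t)\ge\underline\rho>0$ uniformly in $t$; then $\|\dot u\|_{L^2}\le\underline\rho^{-1/2}\|\sqrt\rho\dot u\|_{L^2}\le Ce^{-\eta_1 t}$, and elliptic estimates for $\Delta F=\divv(\rho\dot u)$ with $W^{1,q}\hookrightarrow L^\infty$ $(3<q<6)$ combined with $\sup_{t\ge1}\|\dot u(t)\|_{H^1}\le C(\hat\rho,K)$ yield $\|F(t)\|_{L^\infty}\le C\|\dot u\|_{L^2}^{1-\theta}\|\dot u\|_{H^1}^{\theta}\le Ce^{-\eta_3 t}$. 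Finally, along particle paths $\frac{d}{dt}(\rho-\bar\rho_0)+a(t)(\rho-\bar\rho_0)=-\frac{\rho}{2\mu+\lambda}F+O(e^{-\eta_1 t})$ with $a(t)=\frac{\rho}{2\mu+\lambda}\cdot\frac{P(\rho)-P(\bar\rho_0)}{\rho-\bar\rho_0}\ge\kappa>0$ (the $O(e^{-\eta_1 t})$ absorbing $\bar P-P(\bar\rho_0)$, which is bounded by $C\|\rho-\bar\rho_0\|_{L^2}$), so Grönwall along characteristics gives \eqref{b1.11}.
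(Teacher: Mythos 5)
Your overall architecture --- the basic energy estimate plus a Bogovskii correction for coercivity in $\rho-\bar\rho_0$, the effective viscous flux and vorticity with div-curl estimates adapted to \eqref{a4}, Lyapunov functionals closed by the smallness of $\|\nabla u\|_{L^2}$ for large times, and a transport-equation argument along particle paths for the $L^\infty$ decay --- is essentially the paper's. The reorganization (only algebraic decay of $E_0$ in Step 1, with exponential decay deferred to the combined Lyapunov functional of Step 3; a time weight $\sigma(t)=\min\{1,t\}$ at the $\dot u$ level) is harmless for most of \eqref{1.8}, though note that with the weight you only control $\sigma\|\sqrt\rho\dot u\|_{L^2}^2$ near $t=0$, so the $\sqrt\rho\dot u$ component of \eqref{1.8} is obtained only for $t$ bounded away from zero unless you supply the bound on $\|\sqrt{\rho_0}\dot u_0\|_{L^2}$ separately.

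The genuine gap is in Step 3, in the uniform-in-time positive lower bound of the density. You derive it from ``the integrability in time of $\|\divv u\|_{L^\infty}$,'' but that integrability is not available: $(2\mu+\lambda)\divv u=F+(P-\bar P)$, and while $\int_0^\infty\|F\|_{L^\infty}\,dt<\infty$ does follow from the higher-order bounds (via $\|F\|_{L^\infty}^2\le C\|F\|_{L^6}\|\nabla F\|_{L^6}\le C(1+\|\nabla\dot u\|_{L^2})e^{-\eta t}$), the pressure part $\|P-\bar P\|_{L^\infty}$ is a priori only bounded by $C(\hat\rho)$, and its time-integrability is essentially equivalent to the $L^\infty$ decay \eqref{b1.11} you are trying to prove --- which, in your own scheme, in turn needs the lower bound, since your damping coefficient $a(t)$ contains the factor $\rho$ and degenerates as $\rho\to 0$. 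So the argument is circular as written: lower bound $\Leftarrow$ integrable $\|\divv u\|_{L^\infty}$ $\Leftarrow$ $L^\infty$ decay of $\rho-\bar\rho_0$ $\Leftarrow$ damping $\Leftarrow$ lower bound. The paper (Lemma \ref{l34}) breaks the circle by writing the transport equation along particle paths as $\frac{d}{dt}\vr+\frac{1}{2\mu+\lambda}\rho(\rho^\gamma-\bar\rho_0^\gamma)=-\frac{1}{2\mu+\lambda}\rho F$ and exploiting the sign of the pressure term: when $\rho<\bar\rho_0$ this term pushes the density back up, so only the $F$-forcing can drive it down, and since $\int_{T_1}^\infty\|F\|_{L^\infty}\,dt\le\delta$ a contradiction argument along a trajectory shows $\rho$ cannot fall below a fixed positive level. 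Relatedly, the decay of $\|F\|_{L^\infty}$ must be obtained \emph{without} the density lower bound --- as the paper does via $\|\dot u\|_{L^6}\le C(\|\nabla\dot u\|_{L^2}+\|\nabla u\|_{L^2}^2)$ from Lemma \ref{xu}, which uses only the boundary identity for $\dot u\cdot n$ --- rather than through $\|\dot u\|_{L^2}\le\underline\rho^{-1/2}\|\sqrt\rho\dot u\|_{L^2}$, so that the $F$ estimate is in hand before the lower bound is established.
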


\begin{remark}
Theorem \ref{thm1} gives the first global stability result for large strong solutions of compressible Navier--Stokes equations with vacuum in 3D general bounded domains.
\end{remark}

\begin{remark}
It should be noted that Cai and Li \cite{CL23} established the global existence and uniqueness of classical solutions to the problem \eqref{a1}--\eqref{a4} provided the initial energy is small. One of the key ingredients in \cite{CL23} is to derive a time-independent upper bound of the density. Hence, using similar arguments as those in \cite{CL23}, one can show that a strong solution $(\rho, u)$ satisfying \eqref{a9} indeed exists as long as the initial energy is properly small.
\end{remark}

\begin{remark}
The $L^\infty$ stability result \eqref{b1.11} is an improvment of \cite[(1.16)]{CL23}, where the authors showed that there exist positive constants $C$ and $\eta_0$ independent of $t$ such that, for any $t\geq1$ and $r\in[1,\infty)$,
\begin{align*}
\|\rho-\bar\rho_0\|_{L^r}\le Ce^{-\eta_0t}
\end{align*}
provided the initial energy is properly small.
\end{remark}

Next, we obtain that the vacuum states will not vanish for any time provided vacuum appears initially.
\begin{theorem}\label{thm2}
Assume that all conditions of Theorem \ref{thm1} are satisfied. If additionally $\inf\limits_{x\in\Omega}\rho_0(x)=0$, then it holds
that
\begin{align}\label{aa11}
\inf_{x\in\Omega}\rho(x, t)=0,
\end{align}
for any $t\ge 0$.
\end{theorem}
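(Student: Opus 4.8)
The plan is to argue by contradiction combined with the exponential convergence already established in Theorem \ref{thm1}. Suppose for some time $t_0\ge 0$ we had $\inf_{x\in\Omega}\rho(x,t_0)>0$. We first observe that $\bar\rho_0>0$ is forced: indeed, the continuity equation and $u\cdot n=0$ on $\partial\Omega$ give conservation of mass, so $\bar\rho(t)\equiv\bar\rho_0$, and if $\bar\rho_0=0$ then $\rho\equiv 0$, contradicting $\inf_{x}\rho(x,t_0)>0$. Now the key point is that once the density is strictly positive at one time, we may run the parabolic/transport machinery forward from $t_0$: the velocity $u$ is regular (it lies in $C([0,T];H^2)\cap L^2(0,T;W^{2,q})$ on every finite interval by the definition of strong solution), hence $\nabla u\in L^1_{\mathrm{loc}}([t_0,\infty);L^\infty)$, and the density along characteristics satisfies $\rho(X(t;x),t)=\rho(x,t_0)\exp\!\big(-\!\int_{t_0}^t\operatorname{divv}u\,ds\big)$. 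This yields, for every finite $T>t_0$, a positive lower bound $\inf_{x\in\Omega}\rho(x,t)\ge \big(\inf_x\rho(x,t_0)\big)\exp\!\big(-\!\int_{t_0}^T\|\operatorname{divv}u(\cdot,s)\|_{L^\infty}ds\big)>0$. The difficulty is only whether this lower bound can persist as $T\to\infty$, i.e.\ whether $\int_{t_0}^\infty\|\operatorname{divv}u\|_{L^\infty}\,ds<\infty$.

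For the uniform-in-time lower bound I would exploit the exponential decay \eqref{1.8}. From \eqref{1.8} we have $\|\nabla u(\cdot,t)\|_{L^2}+\|\sqrt\rho\dot u(\cdot,t)\|_{L^2}\le C_1e^{-\eta_1 t}$; combined with the effective-viscous-flux / elliptic estimates (the standard $W^{2,q}$ estimate for the Lam\'e system with the boundary conditions \eqref{a4}, as used in \cite{CL23}), and the now-available positive lower bound of $\rho$ on $[t_0,\infty)$ (from the previous paragraph, bootstrapped together with \eqref{b1.11} which gives $\rho\to\bar\rho_0>0$ uniformly), one obtains $\|\nabla u(\cdot,t)\|_{L^\infty}\le \|\nabla u(\cdot,t)\|_{W^{1,q}}\le Ce^{-\eta t}$ for $t$ large, so $\int_{t_0}^\infty\|\operatorname{divv}u\|_{L^\infty}\,ds<\infty$. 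Feeding this back into the characteristic formula gives a genuine uniform-in-time positive lower bound for $\rho$, and in particular $\inf_{x\in\Omega}\rho(x,t)>0$ for all $t\ge t_0$; since by the time-reversibility of the transport equation on any finite interval the same holds backward to $t=0$, we would get $\inf_{x}\rho_0(x)>0$, contradicting the hypothesis $\inf_x\rho_0(x)=0$.

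Hence no such $t_0$ exists and \eqref{aa11} holds for every $t\ge 0$. I expect the main obstacle to be making the bootstrap of the previous paragraph self-consistent: the uniform lower bound of $\rho$ is needed to get the $W^{2,q}$ (hence $L^\infty$) decay of $\nabla u$, while that decay is what yields the uniform lower bound via the characteristic formula. The cleanest way around this is to note that \eqref{b1.11} already supplies $\|\rho-\bar\rho_0\|_{L^\infty}\to 0$, so there is a time $T_*$ after which $\rho\ge\bar\rho_0/2>0$ uniformly; on $[t_0,T_*]$ the lower bound is finite (finite interval, regular $u$), and on $[T_*,\infty)$ the effective flux estimate with $\rho\ge\bar\rho_0/2$ closes the argument. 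The only genuinely new technical input beyond Theorem \ref{thm1} is therefore the propagation of the positive lower bound along characteristics, which is elementary once $\nabla u\in L^1([t_0,\infty);L^\infty)$ is known.
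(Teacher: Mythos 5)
Your proposal does contain a correct proof, but it is buried inside a much longer detour, part of which is not justified under the hypotheses of Theorem \ref{thm2}. The step that actually proves the theorem is your last one: if $\inf_{x}\rho(x,t_0)>0$ for some $t_0$, then, since $u\in L^2(0,t_0;W^{2,q})$ with $q>3$ gives $\divv u\in L^1(0,t_0;L^\infty)$, the identity $\rho_0(y)=\rho(X(t_0,y),t_0)\exp\bigl(\int_0^{t_0}\divv u(X(s,y),s)\,ds\bigr)$ yields $\inf_y\rho_0(y)\ge \inf_x\rho(x,t_0)\,e^{-\int_0^{t_0}\|\divv u\|_{L^\infty}ds}>0$, contradicting $\inf_x\rho_0(x)=0$. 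This is exactly the mechanism of the paper's proof, which simply runs the same identity forward instead of by contradiction: for every $y$, $\inf_x\rho(x,t)\le \rho_0(y)\,e^{\int_0^t\|\divv u\|_{L^\infty}d\tau}$, the exponent being finite for each fixed $t$ (the paper bounds it by $C(1+t)$ via $\divv u=(F+P-\bar P)/(2\mu+\lambda)$ together with \eqref{3.53}, \eqref{w41}, and \eqref{a9}), and then one takes the infimum over $y$. So the essential input in both arguments is the same: the explicit transport formula for $\ln\rho$ plus local-in-time integrability of $\|\divv u\|_{L^\infty}$.

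The rest of your proposal --- the forward-in-time uniform lower bound on $[t_0,\infty)$, the $W^{2,q}$ decay of $\nabla u$, and the bootstrap --- is not needed for the statement, and one of its ingredients is unavailable: you invoke \eqref{b1.11} to conclude $\rho\to\bar\rho_0>0$ uniformly and hence $\rho\ge\bar\rho_0/2$ after some time $T_*$, but Theorem \ref{thm1} only provides \eqref{b1.11} under the additional hypothesis $\inf_x\rho_0(x)\ge\rho_*>0$, which is precisely what Theorem \ref{thm2} excludes. Indeed, the conclusion \eqref{aa11} itself shows that no uniform positive lower bound on $\rho$ can hold in this setting, so that branch of your bootstrap could not succeed. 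I would delete the forward bootstrap entirely and keep only the finite-interval backward argument, or, more simply, run the characteristic identity forward from $t=0$ as the paper does.
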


\begin{remark}
\eqref{aa11} shows that the vacuum state will not vanish for any time, which together with \eqref{1.8}, \eqref{b1.11}, and Gagliardo--Nirenberg inequality implies that, for any $r>3$,
\begin{align*}
\|\nabla\rho(\cdot,t)\|_{L^r}\geq \hat{C_1}e^{\hat{C_2}t},
\end{align*}
where $\hat{C_1}$ and $\hat{C_2}$ are positive constants independent of $t$.
\end{remark}

Now we sketch the main ideas used in the proof of Theorems \ref{thm1} and \ref{thm2}. Compared with the previous results \cite{WLZ,WZ24} in which the authors dealt with the Cauchy problem, the Navier-slip boundary condition \eqref{a4} develops additional difficulties in deriving {\it a priori} estimates for solutions of the compressible Navier--Stokes equations. Hence we need some new observations to overcome these obstacles. First, we prove the exponential decay estimate of $\|(\rho-\bar\rho_0, \sqrt\rho u)\|_{L^2}$. On one hand, by making the basic energy estimate and the Bogovskii operator (see Lemma \ref{l22}), one can derive an energy-dissipation inequality of the form
\begin{align}\label{z1}
\f{d}{dt}\tilde\E(t)+\tilde\D(t)\le 0,
\end{align}
where $\tilde\E(t)$ is equivalent to $\|(\rho-\bar\rho_0, \sqrt\rho u)\|_{L^2}^2$ and $\tilde\D(t)$ is equivalent to $\|(\rho-\bar\rho_0, \na u)\|_{L^2}$. On the other hand, by making full use of the momentum equation, Poincar\'e's inequality, and the $L^p$ theory of the div-curl system, one has that
\begin{align*}
\|\na u\|_{L^p}\le C(\|\divv u\|_{L^p}+\|\curl u\|_{L^p}),\ \
\|\sqrt\rho u\|_{L^2}\le C\|\nabla u\|_{L^2},
\end{align*}
which implies that $\tilde\D(t)\ge C\tilde\E(t)$. This combined with \eqref{z1} immediately yields the exponential decay rate of $\|(\rho-\bar\rho_0, \sqrt\rho u)\|_{L^2}$.
Next, we show the exponential decay estimate of $\|\na u(\cdot,t)\|_{L^2}$. To this end, taking good properties of the {\it effective viscous flux} $F$ and the {\it vorticity} $\curl u$, we can obtain an important energy inequality \eqref{lwz3}. Note that $\|\na u(\cdot, t)\|_{L^2}$ is sufficiently small for large enough $t$, which together with \eqref{w15} gives a crucial Lyapunov-type energy inequality \eqref{b37}. This along with Gronwall's inequality implies the desired exponential decay rate of $\|\nabla u(\cdot,t)\|_{L^2}$. Next, in order to obtain the exponential decay estimate of $\|\sqrt{\rho}\dot{u}(\cdot,t)\|_{L^2}$, we need to improve the uniform-in-time bounds of $\|\sqrt{\rho}\dot{u}(\cdot,t)\|_{L^2}$. In this process, we have to deal with the integrals on the boundary $\pa\Omega$, such as $\int_{\pa\Omega}F_t(\dot u\cdot n)dS$ and so on. To overcome this difficulty, inspired by \cite{CL23}, one infers from $(u\cdot n)|_{\partial\Omega}=0$ that
\begin{align}\label{a15}
u\cdot\na u\cdot n=-u\cdot\na n\cdot u,\quad \text{on}~\partial\Omega,
\end{align}
which implies
\begin{align*}
(\dot u+(u\cdot\na n)\times u^\bot)\cdot n=0, \quad \text{on}~\partial\Omega,
\end{align*}
with $u^\bot\triangleq-u\times n$, and the key point is to control the term $\int_{\pa\Om}(u\cdot\na n\cdot u)FdS$ (see \eqref{w51}). Combining these key facts with lower-order energy estimates, we obtain a Lyapunov-type energy inequality \eqref{w58}. This will lead to the exponential decay rate of $\|\sqrt{\rho}\dot{u}(\cdot,t)\|_{L^2}$ immediately.
To prove \eqref{b1.11}, the key ingredient is to get a time-independent
positive lower bound of the density $\rho$. This is the case by modifying the method used in \cite{WZ24}. Then we can prove that $\|\rho-\bar\rho_0\|_{L^\infty}$ decays exponentially as well by the damping mechanism of density (see Lemma \ref{l34}). As a by-product, we finally show that the vacuum states will not vanish for any time provided vacuum appears initially.

The rest of this paper is organized as follows. Some important inequalities and auxiliary lemmas will be given in Section \ref{sec2}. Section \ref{sec3} is devoted to proving Theorems \ref{thm1} and \ref{thm2}.

\section{Preliminaries}\label{sec2}

In this section we recall some known facts and elementary inequalities which will be used later. We start with the following interpolation inequalities (cf. \cite[Theorems 5.8 and 5.9, pp. 139--141]{AF03}).

\begin{lemma}\label{lwz}
Assume that $\Omega$ is a domain in $\mathbb{R}^d$ satisfying the cone condition.
\begin{itemize}
\item [$\bullet$] Let $mr>d$, let $r\leq p\leq\infty$; if $mr=d$, let $r\leq p<\infty$; if $mr<d$, let $r\leq p\leq\frac{dr}{d-mr}$. Then there exists a constant $K$ depending on $m,d,r,p$, and the dimensions of the cone providing the cone condition for $\Omega$ such that, for all $f\in W^{m,r}(\Omega)$,
\begin{align}\label{LWZ}
\|f\|_{L^p}\le K\|f\|_{W^{m,r}}^\theta\|f\|_{L^r}^{1-\theta},
\end{align}
where $\theta=\frac{d}{m}\big(\frac1r-\frac1p\big)$.
\item [$\bullet$] Let $r>1$ and $mr>d$. Suppose that either $1\leq q\leq r$ or both $q>r$ and $mr-r<d$. Then there exists a constant $K$ depending on $m,d,r,q$, and the dimensions of the cone providing the cone condition for $\Omega$ such that, for all $f\in W^{m,r}(\Omega)$,
\begin{align}\label{LWZ2}
\|f\|_{L^\infty}\le K\|f\|_{W^{m,r}}^\theta\|f\|_{L^q}^{1-\theta},
\end{align}
where $\theta=\frac{dr}{dr+q(mr-d)}$.
\end{itemize}
\end{lemma}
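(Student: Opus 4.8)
The plan is to reduce both inequalities to their scale-invariant counterparts on $\mathbb{R}^d$ and then transplant the result to $\Omega$ using the cone condition. Concretely, I would first establish, for $f\in W^{m,r}(\mathbb{R}^d)$ and the stated indices, the homogeneous Gagliardo--Nirenberg inequality
\[
\|f\|_{L^p(\mathbb{R}^d)}\le C\,\|D^mf\|_{L^r(\mathbb{R}^d)}^{\theta}\,\|f\|_{L^r(\mathbb{R}^d)}^{1-\theta}
\]
(together with its $L^\infty$ analogue carrying an $L^q$ norm on the right). For $m=1$ and $r<d$ this is the classical route: the Gagliardo--Nirenberg--Sobolev inequality gives $W^{1,1}(\mathbb{R}^d)\hookrightarrow L^{d/(d-1)}$; applying it to $|f|^{s}$ for a suitable power $s$ and using Hölder upgrades this to $W^{1,r}\hookrightarrow L^{dr/(d-r)}$; and the intermediate exponents $r\le p\le dr/(d-r)$ are then reached by the elementary Lebesgue interpolation $\|f\|_{L^p}\le\|f\|_{L^r}^{1-\alpha}\|f\|_{L^{dr/(d-r)}}^{\alpha}$. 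The higher-order case $m\ge2$ follows either by iterating the first-order estimate on derivatives or from the Bessel-potential/Littlewood--Paley description of $W^{m,r}$, and the case $mr\ge d$ (in particular $p=\infty$ in the second bullet) uses the Morrey-type embedding $W^{m,r}\hookrightarrow L^\infty$ in place of the Sobolev one.

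The exact value of $\theta$ is then \emph{forced by scaling}: replacing $f$ by $f(\lambda\,\cdot)$ multiplies $\|f\|_{L^p}$ by $\lambda^{-d/p}$, $\|D^mf\|_{L^r}$ by $\lambda^{m-d/r}$, $\|f\|_{L^r}$ by $\lambda^{-d/r}$ and $\|f\|_{L^q}$ by $\lambda^{-d/q}$, while $\|f\|_{L^\infty}$ is invariant; equating the powers of $\lambda$ on the two sides yields precisely $\theta=\frac{d}{m}\big(\frac1r-\frac1p\big)$ in the first case and $\theta=\frac{dr}{dr+q(mr-d)}$ in the endpoint case. The same computation identifies the admissible ranges of $p$ and $q$ as essentially those for which $\theta\in[0,1]$ (the endpoints $\theta=0,1$ being trivial or pure Sobolev embedding, with the customary exclusion of the failing endpoint when $mr=d$); this is why the dilation-invariant whole-space reduction is the natural first step.

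Finally, to pass from $\mathbb{R}^d$ to $\Omega$: if $\Omega$ admits a bounded linear extension operator $E\colon W^{m,r}(\Omega)\to W^{m,r}(\mathbb{R}^d)$ with $\|Ef\|_{L^r(\mathbb{R}^d)}\le C\|f\|_{L^r(\Omega)}$ (as it does in the paper's application, $\Omega$ being a smooth bounded domain), one simply applies the whole-space inequalities to $Ef$; this produces the stated bounds with $\|f\|_{W^{m,r}(\Omega)}$ on the right, which is only weaker than having $\|D^mf\|_{L^r(\Omega)}$ there. For the full generality quoted---an arbitrary domain with the cone condition, where no universal extension operator need exist---one instead covers $\Omega$ by finitely many congruent copies of a fixed model cone $C$, proves on each cone a rescaled half-space estimate of the form $\|f\|_{L^p}\le C\big(\lambda^{a}\|D^mf\|_{L^r}+\lambda^{-b}\|f\|_{L^r}\big)$ on the dilated cone $\lambda C$ with $0<\lambda\le\lambda_0$, optimizes over $\lambda$, and sums the local estimates. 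The main obstacle throughout is obtaining a constant $K$ independent of $f$ and---in this last argument---uniform over the localization: the cone condition is exactly what supplies the geometric uniformity (one model cone fits inside $\Omega$ with vertex at every point) needed to make the covering-and-rescaling bookkeeping close with a single constant.
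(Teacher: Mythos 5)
The paper offers no proof of this lemma at all: it is quoted as a known result, with the argument delegated entirely to the cited reference \cite[Theorems 5.8 and 5.9]{AF03}, so there is no in-paper proof to compare yours against. That said, your outline is the standard strategy and is essentially the route the reference itself takes for general cone-condition domains (uniform local estimates on a family of congruent cones, rescaled and summed), while the extension-operator shortcut you mention would already suffice for the smooth bounded $\Omega$ to which the lemma is actually applied in this paper. Two caveats on your sketch. First, the dilation computation only \emph{identifies} the critical $\theta$ for the homogeneous inequality on $\mathbb{R}^d$; the stated inequality carries the full $\|f\|_{W^{m,r}}$ norm and is not scale-invariant, so scaling proves nothing by itself --- you acknowledge this, but the actual content of the lemma is the whole-space Gagliardo--Nirenberg estimate together with the transplantation to $\Omega$, and the latter is where all the work (bounded overlap of the cone cover, summing local $L^p$ bounds using $p\ge r$) resides. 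Second, your assertion that the admissible ranges are ``essentially those for which $\theta\in[0,1]$'' does not explain the hypothesis of the second bullet that either $q\le r$ or both $q>r$ and $mr-r<d$: when $mr>d$ one has $\theta=\frac{dr}{dr+q(mr-d)}\in(0,1)$ for \emph{every} $q\ge 1$, so that restriction is a genuine technical condition of the localization argument rather than a consequence of the scaling bookkeeping, and a complete proof would have to address where it enters. Neither point invalidates the approach; the proposal is a sound skeleton of the classical proof, but it is a skeleton.
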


Next, since our solution $u$ does not vanish on the boundary,
we shall use the following generalized Poincar{\'e} inequality (cf. \cite[Lemma 8]{BS2012}).
\begin{lemma}\label{l23}
Let $\Omega\subset\mathbb{R}^3$ be bounded with Lipschitz boundary. Then, for $1<p<\infty$, there exists a positive constant $C$ depending only on $p$ and $\Omega$ such that
\begin{align}\label{z2.5}
\|f\|_{L^p} \leq  C\|\nabla f\|_{L^p},
\end{align}
for each vector field $f\in W^{1,p}(\Omega)$ satisfying either $(f\cdot n)|_{\partial\Omega}=0$ or $(f\times n)|_{\partial\Omega}=0$.
\end{lemma}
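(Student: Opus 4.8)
To establish \eqref{z2.5}, the plan is to combine the classical Poincar\'e--Wirtinger inequality on $\Omega$ with a direct estimate of the mean $\bar f=|\Omega|^{-1}\int_\Omega f\,dx$ that makes use of the boundary condition. Since $\Omega$ is a bounded Lipschitz domain (and, in the situations where this lemma is applied, connected), one has $\|f-\bar f\|_{L^p}\le C\|\na f\|_{L^p}$, so it will suffice to prove that $|\bar f|\le C\|\na f\|_{L^p}$; indeed, this then gives
\begin{align*}
\|f\|_{L^p}\le\|f-\bar f\|_{L^p}+|\Omega|^{1/p}\,|\bar f|\le C\|\na f\|_{L^p}.
\end{align*}
When $\Omega$ is disconnected one argues the same way on each connected component, using that $\partial\Omega$ is the disjoint union of the boundaries of the components.

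For the case $(f\cdot n)|_{\partial\Omega}=0$ I would use, for each $i=1,2,3$, the Gauss--Green identity
\begin{align*}
\int_\Omega x_i\,\divv f\,dx=-\int_\Omega f^i\,dx+\int_{\partial\Omega}x_i\,(f\cdot n)\,dS=-|\Omega|\,\bar f^i,
\end{align*}
which holds for $f\in W^{1,p}(\Omega)$ by approximating $f$ with smooth fields and invoking the continuity of the trace on a Lipschitz domain, the boundary integral vanishing by hypothesis. Hence $|\Omega|\,|\bar f^i|=\big|\int_\Omega x_i\,\divv f\,dx\big|\le\|x_i\|_{L^{p'}(\Omega)}\,\|\divv f\|_{L^p}\le C\|\na f\|_{L^p}$, which is the required bound on $\bar f$.

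For the case $(f\times n)|_{\partial\Omega}=0$ I would instead introduce, for each $i$, the smooth field $w^{(i)}(x)\triangleq\tfrac12\,e_i\times x$, where $e_i$ is the $i$-th coordinate vector; a short computation gives $\curl w^{(i)}=e_i$. Integrating by parts,
\begin{align*}
\int_\Omega f^i\,dx=\int_\Omega f\cdot\curl w^{(i)}\,dx=\int_\Omega(\curl f)\cdot w^{(i)}\,dx-\int_{\partial\Omega}(f\times w^{(i)})\cdot n\,dS.
\end{align*}
The key observation is that on $\partial\Omega$ the integrand $(f\times w^{(i)})\cdot n$ is the scalar triple product of $f$, $w^{(i)}$, $n$, and since $(f\times n)|_{\partial\Omega}=0$ forces $f\parallel n$ there, this triple product vanishes; thus the boundary term is zero, and $|\Omega|\,|\bar f^i|\le\|w^{(i)}\|_{L^{p'}(\Omega)}\,\|\curl f\|_{L^p}\le C\|\na f\|_{L^p}$. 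One then concludes exactly as before.

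The part that needs care — rather than any deep obstacle — is the justification of the two integration-by-parts identities at the $W^{1,p}$ level on a merely Lipschitz domain, i.e.\ the trace theorem and density of smooth fields, together with the geometric fact implicit in both cases that no nonzero constant vector can be everywhere tangent to $\partial\Omega$, nor everywhere normal to it, for a bounded domain. An equivalent and perhaps cleaner route is a compactness--contradiction argument: if \eqref{z2.5} failed there would exist $f_k$ with $\|f_k\|_{L^p}=1$ and $\|\na f_k\|_{L^p}\to0$; by Rellich--Kondrachov and the compactness of the trace operator a subsequence would converge to a field $f$ that is constant, nonzero, and still satisfies the boundary condition on some component, and then the two elementary identities above, applied to that constant, yield a contradiction. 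I expect the write-up to follow whichever of these is shortest given what the paper has already set up — in practice the authors simply cite \cite{BS2012}.
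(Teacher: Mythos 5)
Your argument is correct, but it is genuinely different from what the paper does: the paper offers no proof of this lemma at all and simply cites \cite[Lemma 8]{BS2012}, exactly as you anticipated in your closing sentence. Your self-contained route --- Poincar\'e--Wirtinger plus a bound on the mean $\bar f$ extracted from the boundary condition --- checks out. In the normal-trace case the Gauss--Green identity $\int_\Omega x_i\divv f\,dx=-\int_\Omega f^i\,dx$ is right, and in the tangential-trace case the field $w^{(i)}=\tfrac12 e_i\times x$ indeed satisfies $\curl w^{(i)}=e_i$ (since $\curl(a\times x)=2a$ for constant $a$), the sign in the integration by parts is consistent with $\divv(f\times w)=w\cdot\curl f-f\cdot\curl w$, and the boundary integrand $(f\times w^{(i)})\cdot n=\det(f,w^{(i)},n)=-(f\times n)\cdot w^{(i)}$ vanishes directly from the hypothesis, without even needing the intermediate remark that $f\parallel n$. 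The only points requiring care are the ones you already flag: density of smooth fields and the trace theorem on a Lipschitz domain to justify the two identities for $f\in W^{1,p}$, and connectedness (harmless here, since the paper's $\Omega$ is a simply connected smooth bounded domain). What your approach buys is a short, elementary, quotable proof in place of an external citation; what the citation buys the authors is brevity and a statement already formulated for general Lipschitz domains. Either version would serve the rest of the paper equally well, since the lemma is only ever invoked through the inequality \eqref{z2.5} itself.
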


The combination of Lemma \ref{lwz} and the Poincar{\'e} inequality yields the following Gagliardo--Nirenberg inequality.
\begin{lemma}\label{l21}
(Gagliardo--Nirenberg inequality, special case).
Assume that $\Omega$ is a bounded Lipschitz domain in $\mathbb{R}^3$. For $p\in [2, 6]$, $q\in (1, \infty)$, and $r\in (3, \infty)$, there exist generic constants $C_i>0\ (i\in\{1,2,3,4\})$ which may depend only on $p$, $q$, $r$, and $\Omega$ such that, for any $f\in H^1$ and $g\in L^q\cap D^{1, r}$,
\begin{gather}
\|f\|_{L^p}\le C_1\|f\|_{L^2}^\f{6-p}{2p}\|\na f\|_{L^2}^\f{3p-6}{2p}+C_2\|f\|_{L^2},\label{f1}\\
\|g\|_{L^\infty}\le C_3\|g\|_{L^q}^{\frac{q(r-3)}{3r+q(r-3)}}\|\na g\|_{L^r}^{\frac{3r}{3r+q(r-3)}}+C_4\|g\|_{L^2}.\label{f2}
\end{gather}
Moreover, if $\int_{\Omega}f(x)dx=0$ or $(f\cdot n)|_{\partial\Omega}=0$ or $(f\times n)|_{\partial\Omega}=0$, we can choose $C_2=0$. Similarly, the constant $C_4=0$ provided $\int_{\Omega}g(x)dx=0$ or $(g\cdot n)|_{\partial\Omega}=0$ or $(g\times n)|_{\partial\Omega}=0$.
\end{lemma}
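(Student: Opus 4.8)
The plan is to obtain both estimates by specializing the abstract interpolation inequality of Lemma~\ref{lwz} and then converting its $W^{m,r}$--$L^r$ form into the stated gradient form. The only mechanisms needed are: (i) the elementary bound $\|h\|_{W^{1,s}}\le\|h\|_{L^s}+\|\na h\|_{L^s}$ together with $(a+b)^\th\le a^\th+b^\th$ for $\th\in[0,1]$, to peel the gradient off the Sobolev norm; (ii) the Poincar\'e--Wirtinger inequality $\|h-\bar h\|_{L^s}\le C\|\na h\|_{L^s}$ and Lemma~\ref{l23}, to turn the low-order remainder into an $L^2$ term and to kill it entirely under the stated hypotheses on $f$ or $g$; and (iii) a Young-inequality absorption to handle the case $q>2$ in \eqref{f2}.

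For \eqref{f1}, I apply the first bullet of Lemma~\ref{lwz} with $d=3$, $m=1$, $r=2$ and $p\in[2,6]$ (admissible because $mr=2<3$ and $\f{dr}{d-mr}=6$); it gives $\|f\|_{L^p}\le K\|f\|_{W^{1,2}}^\th\|f\|_{L^2}^{1-\th}$ with $\th=3(\f12-\f1p)=\f{3p-6}{2p}$ and $1-\th=\f{6-p}{2p}$, exactly the exponents appearing in \eqref{f1}. Bounding $\|f\|_{W^{1,2}}^\th\le\|f\|_{L^2}^\th+\|\na f\|_{L^2}^\th$ then yields \eqref{f1} with $C_1=C_2=K$. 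If $\int_\Om f\,dx=0$, or $(f\cdot n)|_{\pa\Om}=0$, or $(f\times n)|_{\pa\Om}=0$, then $\|f\|_{L^2}\le C\|\na f\|_{L^2}$ by Poincar\'e--Wirtinger in the first case and by Lemma~\ref{l23} with $p=2$ in the last two, so $\|f\|_{W^{1,2}}\le C\|\na f\|_{L^2}$ and the remainder term drops, giving $C_2=0$.

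For \eqref{f2}, I apply the second bullet of Lemma~\ref{lwz} with $d=3$, $m=1$ and exponent $r\in(3,\infty)$ (so $mr=r>3$ and $mr-r=0<d$, which makes every $q\ge 1$ admissible); it gives $\|g\|_{L^\infty}\le K\|g\|_{W^{1,r}}^\th\|g\|_{L^q}^{1-\th}$ with $\th=\f{3r}{3r+q(r-3)}$ and $1-\th=\f{q(r-3)}{3r+q(r-3)}$, matching \eqref{f2}. Using $\|g\|_{L^r}\le\|g-\bar g\|_{L^r}+|\Om|^{1/r}|\bar g|\le C\|\na g\|_{L^r}+C\|g\|_{L^2}$ (with $|\bar g|\le|\Om|^{-1/2}\|g\|_{L^2}$) and then $(a+b)^\th\le a^\th+b^\th$, I arrive at
\[
\|g\|_{L^\infty}\le C\|\na g\|_{L^r}^\th\|g\|_{L^q}^{1-\th}+C\|g\|_{L^2}^\th\|g\|_{L^q}^{1-\th}.
\]
When $q\le 2$ the last term is $\le C\|g\|_{L^2}$ since $\|g\|_{L^q}\le C\|g\|_{L^2}$; when $q>2$ I interpolate $\|g\|_{L^q}\le\|g\|_{L^2}^{2/q}\|g\|_{L^\infty}^{1-2/q}$, and because the resulting exponents of $\|g\|_{L^2}$ and $\|g\|_{L^\infty}$ then sum to $1$, Young's inequality gives $C\|g\|_{L^2}^\th\|g\|_{L^q}^{1-\th}\le\f12\|g\|_{L^\infty}+C\|g\|_{L^2}$; since $g\in L^q\cap D^{1,r}$ with $r>3$ already forces $g\in W^{1,r}\hookrightarrow L^\infty$, the $\|g\|_{L^\infty}$ piece can be moved to the left, proving \eqref{f2}. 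Under $\int_\Om g\,dx=0$, or $(g\cdot n)|_{\pa\Om}=0$, or $(g\times n)|_{\pa\Om}=0$, one has $\|g\|_{L^r}\le C\|\na g\|_{L^r}$ by Poincar\'e--Wirtinger, resp. Lemma~\ref{l23} with $p=r$, so no absorption is needed and $C_4=0$.

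The computation is mostly bookkeeping of exponents; the one point deserving care is the absorption in the $q>2$ branch of \eqref{f2}, where I must check that the Young-inequality weights close up so the leftover is precisely $\|g\|_{L^2}$ (and not some other power of it), and that $\|g\|_{L^\infty}$ is \emph{a priori} finite before it is moved across. Both are consequences of $\Om$ being bounded together with $r>3$.
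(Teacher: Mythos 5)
Your proposal is correct and follows essentially the same route as the paper: both estimates are obtained by specializing Lemma~\ref{lwz} with $d=3$, $m=1$ (taking $r=2$ for \eqref{f1} and the given $r>3$ for \eqref{f2}), peeling the gradient off the Sobolev norm, and invoking the Poincar\'e inequality or Lemma~\ref{l23} to drop the remainder under the stated side conditions. The only cosmetic difference is in \eqref{f2}: you split $\|g\|_{L^r}$ by Poincar\'e--Wirtinger and then interpolate the $\|g\|_{L^q}$ factor, whereas the paper collapses the remainder into $\|g\|_{L^s}$ with $s=\max\{q,r\}$ and interpolates between $L^2$ and $L^\infty$ --- but both hinge on the same absorption of an $\varepsilon\|g\|_{L^\infty}$ term.
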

\begin{proof}
For $f\in H^1$, choosing $m=1,r=2$, and $d=3$ in \eqref{LWZ} (we recall that the Lipschitz condition implies the cone condition, see \cite[p. 84]{AF03}), one has that
\begin{align}\label{lz}
\|f\|_{L^p}\le K\|f\|_{H^1}^{\frac{3p-6}{2p}}\|f\|_{L^2}^{\frac{6-p}{2p}}
\le C_1\|f\|_{L^2}^\f{6-p}{2p}\|\na f\|_{L^2}^\f{3p-6}{2p}+C_2\|f\|_{L^2}.
\end{align}
If $\int_{\Omega}f(x)dx=0$, the classical Poincar{\'e} inequality asserts that
\begin{align*}
\|f\|_{L^2}\leq C\|\nabla f\|_{L^2},
\end{align*}
and the same result also holds when $(f\cdot n)|_{\partial\Omega}=0$ or $(f\times n)|_{\partial\Omega}=0$ due to Lemma \ref{l23}. Thus, the right-hand side of the first inequality in \eqref{lz} can be controlled by $C\|f\|_{L^2}^\f{6-p}{2p}\|\na f\|_{L^2}^\f{3p-6}{2p}$ provided $\int_{\Omega}f(x)dx=0$ or $(f\cdot n)|_{\partial\Omega}=0$ or $(f\times n)|_{\partial\Omega}=0$.

For $g\in L^q\cap D^{1, r}$, choosing $m=1$ and $d=3$ in \eqref{LWZ2}, one has that
\begin{align}\label{lz2}
\|g\|_{L^\infty}
& \le K\|g\|_{L^q}^{\frac{q(r-3)}{3r+q(r-3)}}\|g\|_{W^{1,r}}^{\frac{3r}{3r+q(r-3)}} \notag \\
& \le C\|g\|_{L^q}^{\frac{q(r-3)}{3r+q(r-3)}}\|\nabla g\|_{L^r}^{\frac{3r}{3r+q(r-3)}}+C\|g\|_{L^q}^{\frac{q(r-3)}{3r+q(r-3)}}\| g\|_{L^r}^{\frac{3r}{3r+q(r-3)}}
\notag \\
& \le C\|g\|_{L^q}^{\frac{q(r-3)}{3r+q(r-3)}}\|\nabla g\|_{L^r}^{\frac{3r}{3r+q(r-3)}}+C\|g\|_{L^s},
\end{align}
where $s=\max\{q,r\}>2$. It follows from H{\"o}lder's inequality and Young's inequality that, for any $\varepsilon>0$,
\begin{align}\label{lz3}
\|g\|_{L^s}\le C(\varepsilon)\|g\|_{L^2}+\varepsilon\|g\|_{L^\infty}.
\end{align}
Hence, we obtain \eqref{f2} from \eqref{lz2} and \eqref{lz3} after choosing $\varepsilon$ sufficiently small.
Moreover, it is noted that the classical Poincar{\'e} inequality and Lemma \ref{l23} indicate that
\begin{align*}
\|g\|_{W^{1,r}}\leq C\|\nabla g\|_{L^r},
\end{align*}
if $\int_{\Omega}g(x)dx=0$ or $(g\cdot n)|_{\partial\Omega}=0$ or $(g\times n)|_{\partial\Omega}=0$. So
\begin{align}
\|g\|_{L^\infty}
\le C\|g\|_{L^q}^{\frac{q(r-3)}{3r+q(r-3)}}\|\nabla g\|_{L^r}^{\frac{3r}{3r+q(r-3)}},
\end{align}
that is $C_2=0$ provided $\int_{\Omega}g(x)dx=0$ or $(g\cdot n)|_{\partial\Omega}=0$ or $(g\times n)|_{\partial\Omega}=0$.
\end{proof}

\begin{remark}
If we take $d=2$ in Lemma \ref{lwz}, then we can obtain Gagliardo--Nirenberg inequality in 2D bounded domains.
\begin{itemize}
\item [$\bullet$] Assume that $\Omega$ is a bounded Lipschitz domain in $\mathbb{R}^2$.
For $p\in [2, \infty)$, $q\in(1, \infty)$, and $r\in (2, \infty)$, there exist generic constants $C_i>0\ (i\in\{1,2,3,4\})$ which may depend only on $p$, $q$, $r$, and $\Omega$ such that, for $f\in H^1$ and $g\in L^q$ with $\nabla g\in L^{r}$,
\begin{gather*}
\|f\|_{L^p}\le C_1\|f\|_{L^2}^\frac{2}{p}\|\nabla f\|_{L^2}^{1-\frac{2}{p}}+C_2\|f\|_{L^2},\\
\|g\|_{L^\infty}\le C_3\|g\|_{L^q}^\frac{q(r-2)}{2r+q(r-2)}\|\nabla g\|_{L^r}^\frac{2r}{2r+q(r-2)}+C_4\|g\|_{L^2}.
\end{gather*}
Moreover, if $\int_{\Omega}f(x)dx=0$ or $(f\cdot n)|_{\partial\Omega}=0$, we can choose $C_2=0$. Similarly, the constant $C_4=0$ provided $\int_{\Omega}g(x)dx=0$ or $(g\cdot n)|_{\partial\Omega}=0$.
\end{itemize}
\end{remark}

Next, we introduce the Bogovskii operator in a bounded domain, which plays an important role in controlling $\|P\|_{L^2}^2$. One has the following conclusion (cf. \cite[Lemma 3.17]{NS04}).
\begin{lemma}\label{l22}
Let $\Omega\subset\mathbb{R}^3$ be a bounded Lipschitz domain. Then, for any $p\in(1,\infty)$, there exists a linear operator $\mathcal{B}=\left[\mathcal{B}_{1}, \mathcal{B}_{2}, \mathcal{B}_{3}\right]: L^{p}(\Omega) \rightarrow \big(W_{0}^{1, p}(\Omega)\big)^{3}$ such that
\begin{align*}
\begin{cases}
\operatorname{div} \mathcal{B}[f]=f, & x \in \Omega, \\
\mathcal{B}[f]=0, & x \in \partial \Omega,
\end{cases}
\end{align*}
and
\begin{align*}
\|\nabla\mathcal{B}[f]\|_{L^{p}} \leq C(p,\Omega)\|f\|_{L^{p}}.
\end{align*}
Moreover, if $f=\operatorname{div} g$ with $g\in L^p(\Omega)$ satisfying $(g \cdot n)|_{\partial \Omega}=0$, it holds that
\begin{align*}
\|\mathcal{B}[f]\|_{L^{p}} \leq C(p,\Omega)\|g\|_{L^{p}}.
\end{align*}
\end{lemma}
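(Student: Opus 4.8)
The plan is to follow Bogovskii's classical construction together with Calder\'on--Zygmund theory, reducing a general bounded Lipschitz domain to the star-shaped case by a partition of unity. Observe first that $\divv\mathcal{B}[f]=f$ and $\mathcal{B}[f]|_{\partial\Omega}=0$ force $\int_\Omega f\,dx=0$, so the construction is carried out for zero-mean $f$; for general $f$ one instead solves $\divv\mathcal{B}[f]=f-|\Omega|^{-1}\big(\int_\Omega f\,dx\big)\chi$ with a fixed $\chi\in C_c^\infty(\Omega)$, $\int_\Omega\chi\,dx=|\Omega|$, which does not affect the bounds.

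First I would treat the case in which $\Omega$ is star-shaped with respect to an open ball $B$ with $\overline B\subset\Omega$. Fixing $\omega\in C_c^\infty(B)$ with $\int\omega\,dx=1$ and, for $f\in C_c^\infty(\Omega)$ with $\int f\,dx=0$, setting
\[
\mathcal{B}[f](x)\triangleq\int_\Omega f(y)\,\frac{x-y}{|x-y|^3}\left(\int_{|x-y|}^{\infty}\omega\Big(y+r\,\tfrac{x-y}{|x-y|}\Big)r^2\,dr\right)dy ,
\]
one verifies, by passing to polar coordinates centred at $y$, that $\divv\mathcal{B}[f]=f$ on $\mathbb R^3$ and that $\mathcal{B}[f]\in C_c^\infty(\Omega)$, so in particular $\mathcal{B}[f]|_{\partial\Omega}=0$. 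For the gradient estimate I would differentiate under the integral sign and integrate out the radial variable, so as to represent each $\partial_j\mathcal{B}^i[f]$ as a principal-value singular integral whose kernel is homogeneous of degree $-3$ in $x-y$ with vanishing mean over the unit sphere, plus a remainder operator with a weakly singular (locally integrable) kernel. The singular part is then a Calder\'on--Zygmund operator, bounded on $L^p(\mathbb R^3)$ for every $1<p<\infty$, while the remainder is bounded by Young's inequality; together these give $\|\nabla\mathcal{B}[f]\|_{L^p}\le C(p,\Omega)\|f\|_{L^p}$, and density extends $\mathcal{B}$ to a bounded linear operator on $\{f\in L^p(\Omega):\int_\Omega f\,dx=0\}$.

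Next, for the refined estimate when $f=\divv g$ with $(g\cdot n)|_{\partial\Omega}=0$, I would integrate by parts once in $y$ in the formula above; the boundary term vanishes precisely because $g\cdot n=0$ on $\partial\Omega$, and the resulting operator $g\mapsto\mathcal{B}[\divv g]$ has a kernel of the same Calder\'on--Zygmund type (plus a weakly singular remainder), whence $\|\mathcal{B}[\divv g]\|_{L^p}\le C(p,\Omega)\|g\|_{L^p}$. Finally, to pass to a general bounded Lipschitz domain I would use that $\Omega$ admits a finite open cover $\{U_k\}_{k=1}^N$ with each $\Omega\cap U_k$ star-shaped with respect to a ball, take a subordinate partition of unity $\{\psi_k\}$, and write $f=\sum_k\psi_k f$; since $\int_\Omega\psi_k f\,dx$ need not vanish, one corrects the pieces successively, subtracting from each a suitable multiple of a fixed bump so that every corrected piece has zero mean and the corrections telescope to $0$ (using $\int_\Omega f\,dx=0$), then applies the previous steps on each $\Omega\cap U_k$ and sums, the correction terms being controlled by $\|f\|_{L^p}$ via H\"older's inequality. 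The same patching handles the estimate for $f=\divv g$.

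The analytic heart of the argument, and the main obstacle, is the gradient bound: pinning down the exact kernel of $\nabla\mathcal{B}$ and verifying the Calder\'on--Zygmund cancellation (H\"ormander) condition, which is where all the real content sits. The partition-of-unity reduction to star-shaped pieces and the zero-mean corrections are, by comparison, routine bookkeeping once the star-shaped case is in hand.
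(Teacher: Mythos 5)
The paper does not prove this lemma at all --- it is quoted from \cite[Lemma 3.17]{NS04} --- and the proof in that reference is precisely the construction you outline: Bogovskii's explicit kernel on domains star-shaped with respect to a ball, Calder\'on--Zygmund theory for the $W^{1,p}$ bound, integration by parts (justified by density of smooth fields with vanishing normal trace, since for a general $g\in L^p$ the trace $(g\cdot n)|_{\partial\Omega}$ only makes sense in a weak sense) for the $f=\divv g$ estimate, and the partition-of-unity/zero-mean-correction reduction to a general bounded Lipschitz domain. Your sketch is correct, and you are right to flag that the statement implicitly requires the compatibility condition $\int_\Omega f\,dx=0$, which the quoted formulation omits but which holds in the paper's only application, where $f=\rho-\bar\rho_0$.
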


The following two lemmas are given in \cite[Theorem 3.2]{WW92} and \cite[Propositions 2.6--2.9]{A2014}.
\begin{lemma}\label{lm24}
Let $k\ge 0$ be an integer number and $1<q<+\infty$. Assume that $\Omega\subset\mathbb R^3$ is a simply connected bounded domain with $C^{k+1, 1}$ boundary $\pa\Omega$. Then, for $v\in W^{k+1, q}$ with $(v\cdot n)|_{\partial\Omega}=0$, it holds that
\begin{align}\label{ff5}
\|v\|_{W^{k+1, q}}\le C(\|\div v\|_{W^{k, q}}+\|\curl v\|_{W^{k, q}}).
\end{align}
 In particular, for $k=0$, we have
 \begin{align}
 \|\na v\|_{L^q}\le C(\|\div v\|_{L^q}+\|\curl v\|_{L^q}).
 \end{align}
\end{lemma}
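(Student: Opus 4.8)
The statement to prove is Lemma \ref{lm24}, the $W^{k+1,q}$-regularity estimate for a vector field $v$ in terms of its divergence and curl, given the normal boundary condition $(v\cdot n)|_{\partial\Omega}=0$ on a simply connected bounded domain with $C^{k+1,1}$ boundary. I will sketch the proof below.

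\bigskip

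\noindent\textbf{Proof proposal.}

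The plan is to reduce the estimate to standard elliptic theory via a Helmholtz-type decomposition adapted to the boundary condition $(v\cdot n)|_{\partial\Omega}=0$. First I would write $v = \nabla\varphi + w$, where $\varphi$ solves the Neumann problem $\Delta\varphi = \divv v$ in $\Omega$ with $\partial_n\varphi = 0$ on $\partial\Omega$ (solvable since $\int_\Omega \divv v\,dx = \int_{\partial\Omega} v\cdot n\,dS = 0$ by the boundary condition), and $w := v - \nabla\varphi$ is then divergence-free with $(w\cdot n)|_{\partial\Omega}=0$ and $\curl w = \curl v$. Elliptic regularity for the Neumann problem gives $\|\nabla\varphi\|_{W^{k+1,q}} \le C\|\divv v\|_{W^{k,q}}$ (modulo the harmless additive constant that disappears after taking a gradient). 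It then remains to bound $\|w\|_{W^{k+1,q}}$ by $C\|\curl w\|_{W^{k,q}}$ for a divergence-free field with vanishing normal trace.

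For the divergence-free piece $w$, the key point is that on a \emph{simply connected} domain, a divergence-free field with $(w\cdot n)|_{\partial\Omega}=0$ admits a vector potential: there is a field $A$ with $w = \curl A$, $\divv A = 0$, and a suitable boundary condition on $A$ (e.g. $A\times n = 0$ on $\partial\Omega$), with $\|A\|_{W^{k+2,q}} \le C\|w\|_{W^{k+1,q}}$ — but more usefully, one can instead argue directly: $w$ solves the div-curl elliptic system $\divv w = 0$, $\curl w = \curl v =: f$ with $(w\cdot n)|_{\partial\Omega}=0$, and this is an elliptic boundary value problem (an ADN-elliptic system in the sense of Agmon--Douglis--Nirenberg, the "electrostatic" boundary condition) whose $L^q$-coercive estimate reads $\|w\|_{W^{k+1,q}} \le C(\|\divv w\|_{W^{k,q}} + \|\curl w\|_{W^{k,q}} + \|w\|_{L^q})$. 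Simple connectedness guarantees there are no nontrivial harmonic fields in the relevant cohomology class, so the lower-order term $\|w\|_{L^q}$ can be absorbed by a standard compactness-contradiction argument (if it could not, one would produce a nonzero $w$ with $\divv w = \curl w = 0$ and $(w\cdot n)|_{\partial\Omega}=0$, which must vanish on a simply connected domain). This is exactly the content of the cited references \cite{WW92,A2014}; I would invoke that machinery rather than reprove it.

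Assembling the pieces: $\|v\|_{W^{k+1,q}} \le \|\nabla\varphi\|_{W^{k+1,q}} + \|w\|_{W^{k+1,q}} \le C\|\divv v\|_{W^{k,q}} + C\|\curl v\|_{W^{k,q}} = C(\|\divv v\|_{W^{k,q}} + \|\curl v\|_{W^{k,q}})$, which is \eqref{ff5}; the case $k=0$ is the stated particular case. The main obstacle — and the only genuinely non-elementary step — is the absorption of the lower-order term $\|w\|_{L^q}$, i.e. establishing that the homogeneous div-curl system with vanishing normal trace has only the trivial solution; this is where simple connectedness of $\Omega$ is essential and where one must either quote the Friedrichs-type inequality / de Rham cohomology vanishing or run the compactness argument carefully (it also requires the $C^{k+1,1}$ regularity of $\partial\Omega$ to run the elliptic estimates up to the boundary). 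Since the lemma is explicitly attributed to \cite{WW92} and \cite{A2014}, in the write-up I would simply cite those works for the coercive div-curl estimate and the vanishing of the kernel, and note that the Neumann-problem splitting above is what packages it into the form \eqref{ff5}.
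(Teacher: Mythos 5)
Your proposal is essentially correct, but note that the paper does not prove this lemma at all: it is quoted verbatim from \cite[Theorem 3.2]{WW92} together with \cite[Propositions 2.6--2.9]{A2014}, so there is no in-paper argument to compare against. Your sketch is a faithful reconstruction of the standard route behind those references: split off the potential part by solving the Neumann problem $\Delta\varphi=\divv v$, $\partial_n\varphi=0$ (compatible because $\int_\Omega\divv v\,dx=\int_{\partial\Omega}v\cdot n\,dS=0$), use elliptic regularity up to the $C^{k+1,1}$ boundary for $\nabla\varphi$, and treat the remaining divergence-free field with vanishing normal trace by the ADN-coercive div-curl estimate, absorbing the lower-order term $\|w\|_{L^q}$ through a compactness--contradiction argument whose kernel statement (no nontrivial harmonic fields with $\divv w=\curl w=0$ and $(w\cdot n)|_{\partial\Omega}=0$) is exactly where simple connectedness enters. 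Two small remarks: the Helmholtz splitting is not strictly needed, since one can run the same coercive estimate and compactness absorption directly on $v$ itself; and your parenthetical aside about a vector potential $A$ with $A\times n=0$ is the one place where the topological hypothesis would be the connectedness of $\partial\Omega$ rather than simple connectedness of $\Omega$ --- but since you discard that route in favor of the direct elliptic argument, this does not affect the proof. You correctly identify the genuinely non-elementary ingredients (the $L^q$ coercive estimate for the div-curl system and the vanishing of the cohomological kernel), and citing \cite{WW92,A2014} for them, as the paper does, is appropriate.
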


\begin{lemma}\label{lm25}
Let $k\geq0$ be an integer number and $1<q<+\infty$. Suppose that $\Omega\subset\mathbb R^3$ is a bounded domain and its $C^{k+1,1}$ boundary $\partial\Omega$ has a finite number of 2-dimensional connected components. Then, for $v\in W^{k+1,q}$ with $(v\times n)|_{\partial\Omega}=0$, we have
$$\|v\|_{W^{k+1,q}}\leq C(\|\div v\|_{W^{k,q}}+\|\curl v\|_{W^{k,q}}+\|v\|_{L^q}).$$
In particular, if  $\Omega$ has no holes, then
$$\|v\|_{W^{k+1,q}}\leq C(\|\div v\|_{W^{k,q}}+\|\curl v\|_{W^{k,q}}).$$
\end{lemma}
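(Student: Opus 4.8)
The plan is to reduce the first–order div--curl system to a second–order elliptic boundary value problem for the vector Laplacian and then invoke Agmon--Douglis--Nirenberg regularity. Writing $f\triangleq\div v$ and $g\triangleq\curl v$, the elementary identity
\begin{align*}
-\Delta v=\curl\curl v-\na\div v=\curl g-\na f
\end{align*}
exhibits $v$ as a solution of a vector Poisson equation whose right–hand side lies in $W^{k-1,q}$ with norm bounded by $\|\div v\|_{W^{k,q}}+\|\curl v\|_{W^{k,q}}$. To close the problem I would supplement it with the boundary conditions $(v\times n)|_{\pa\Om}=0$, contributing two (tangential) scalar conditions, together with the trace $(\div v)|_{\pa\Om}=f$, contributing the remaining (normal, Neumann–type) condition. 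In the moving normal–tangential frame on $\pa\Om$ these amount to a Dirichlet condition on the tangential part of $v$ and a Neumann–type condition on its normal part, which is a classical complementing (Lopatinski--Shapiro) boundary operator for $-\Delta$; here the $C^{k+1,1}$ regularity of $\pa\Om$ enters when localizing by a partition of unity and flattening the boundary. The ADN a priori estimate then yields
\begin{align*}
\|v\|_{W^{k+1,q}}\le C\big(\|\Delta v\|_{W^{k-1,q}}+\|\div v\|_{W^{k,q}}+\|v\|_{L^q}\big)\le C\big(\|\div v\|_{W^{k,q}}+\|\curl v\|_{W^{k,q}}+\|v\|_{L^q}\big),
\end{align*}
which is the first asserted inequality.

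To upgrade this to the second inequality when $\Om$ has no holes, I would remove the term $\|v\|_{L^q}$ by a contradiction–compactness argument. If the sharpened estimate failed there would be a sequence $(v_j)$ with $\|v_j\|_{W^{k+1,q}}=1$ and $\|\div v_j\|_{W^{k,q}}+\|\curl v_j\|_{W^{k,q}}\to0$. Since $W^{k+1,q}\hookrightarrow\hookrightarrow L^q$, a subsequence converges in $L^q$; applying the estimate just proved to the differences $v_i-v_j$ shows that the sequence is Cauchy in $W^{k+1,q}$, so $v_j\to v$ in $W^{k+1,q}$ with $\|v\|_{W^{k+1,q}}=1$, while $\div v=0$ and $\curl v=0$ in $\Om$ and $(v\times n)|_{\pa\Om}=0$. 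Thus $v$ is a nontrivial harmonic field of the relevant type, and it suffices to rule this out.

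The nonexistence of such fields is exactly where the ``no holes'' hypothesis is used. Because $\Om$ is simply connected, $\curl v=0$ gives a potential $v=\na\phi$; then $\div v=0$ forces $\Delta\phi=0$ in $\Om$, while $(v\times n)|_{\pa\Om}=0$ says that $\na\phi$ is parallel to $n$, i.e.\ the tangential gradient of $\phi$ vanishes on $\pa\Om$ and $\phi$ is constant on each connected component of $\pa\Om$. When $\pa\Om$ is connected (the domain has no interior cavities), $\phi$ equals a single constant on $\pa\Om$, so by uniqueness for the Dirichlet problem $\phi$ is constant throughout $\Om$ and $v=\na\phi\equiv0$, contradicting $\|v\|_{W^{k+1,q}}=1$. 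Hence the lower–order term may be dropped, proving the second inequality.

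The step I expect to be most delicate is the verification that $\{(v\times n)|_{\pa\Om}=0,\ (\div v)|_{\pa\Om}=f\}$ is a genuine complementing boundary operator for the vector Laplacian, since once the a priori estimate with the $\|v\|_{L^q}$ remainder is available, the compactness reduction and the topological triviality of the harmonic fields are routine. An alternative route, closer to \cite{A2014}, avoids the ADN verification altogether by constructing $v$ from an $L^q$ Hodge decomposition $v=\na\phi+\curl\psi$ adapted to the condition $(v\times n)|_{\pa\Om}=0$ and reading the estimate off the scalar problem for $\phi$ and the vector–potential problem for $\psi$; in that formulation the dimension of the obstructing harmonic fields equals the number of connected components of $\pa\Om$ minus one, again vanishing precisely under the ``no holes'' assumption.
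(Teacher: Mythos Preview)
The paper does not supply a proof of this lemma; it simply quotes it from \cite[Propositions~2.6--2.9]{A2014}, so there is no in-paper argument to compare against. Your route---reduce the div--curl system to a vector Poisson problem, verify the complementing condition for $\{v\times n=0,\ \div v=f\}$, invoke ADN to obtain the estimate with the remainder $\|v\|_{L^q}$, then remove the remainder by compactness and the triviality of the relevant harmonic fields---is a standard and correct strategy, in the spirit of the cited reference.

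There is one slip in your third paragraph. You write ``Because $\Omega$ is simply connected, $\curl v=0$ gives a potential $v=\nabla\phi$'', but the hypothesis ``$\Omega$ has no holes'' means that $\partial\Omega$ is connected (second Betti number zero), not that $\Omega$ is simply connected (first Betti number zero); a solid torus has no holes in this sense yet is not simply connected. The conclusion $v=\nabla\phi$ is nonetheless true, for a different reason: since $(v\times n)|_{\partial\Omega}=0$, the zero extension $\tilde v$ of $v$ to $\mathbb{R}^3$ satisfies $\curl\tilde v=0$ in the sense of distributions (no tangential jump, hence no surface curl), and as $\mathbb{R}^3$ is simply connected one obtains $\tilde v=\nabla\Phi$; restricting to $\Omega$ gives $v=\nabla\phi$. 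Your subsequent steps---$\Delta\phi=0$, $\phi$ locally constant on $\partial\Omega$, hence globally constant when $\partial\Omega$ is connected, hence $v\equiv0$---then go through unchanged. Your closing remark that the obstructing space has dimension $(\#\,\text{components of }\partial\Omega)-1$ shows you had already identified the correct topological invariant; only the intermediate justification needed this adjustment.
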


Motivated by \cite{H95,H952}, we set
\begin{align}\label{f5}
F\triangleq(2\mu+\lambda)\divv u-(P-\bar P), \quad \omega\triangleq\na\times u,
\end{align}
where $F$ and $\omega$ denote the {\it effective viscous flux} and the {\it vorticity}, respectively. For $F$, $\omega$, and $\na u$, we have the following key {\it a priori} estimates.
\begin{lemma}\label{l24}
Let $(\rho, u)$ be a classical solution of \eqref{a1}--\eqref{a4} on $\Omega\times(0, T]$. Then, for any $p\in[2, 6]$ and $1<q<+\infty$, there exists a positive constant $C$ depending only on $p$, $q$, $\mu$, $\lambda$, and $\Omega$ such that
\begin{gather}
\|\nabla u\|_{L^q}\le C\big(\|\divv u\|_{L^q}+\|\omega\|_{L^q}\big), \label{f6}
\\
\|\nabla F\|_{L^p}+\|\nabla\omega\|_{L^p}\le C\big(\|\rho\dot u\|_{L^p}+\|\nabla u\|_{L^2}\big),\label{f8}\\
\|F\|_{L^p}\le C\|\rho\dot u\|_{L^2}^{\frac{3p-6}{2p}}\big(\|\nabla u\|_{L^2}+\|P-\bar P\|_{L^2}\big)^{\frac{6-p}{2p}}
+C\big(\|\nabla u\|_{L^2}+\|P-\bar P\|_{L^2}\big),
\\
\|\omega\|_{L^p}\le C\|\rho\dot u\|_{L^2}^{\frac{3p-6}{2p}}\|\nabla u\|_{L^2}^{\frac{6-p}{2p}}+C\|\nabla u\|_{L^2}.
\end{gather}
Moreover, one has that
\begin{align}\label{f12}
\|\nabla u\|_{L^p}&\le C\|\rho\dot u\|_{L^2}^{\frac{3p-6}{2p}}\big(\|\nabla u\|_{L^2}+\|P-\bar P\|_{L^2}\big)^{\frac{6-p}{2p}}
+C\big(\|\nabla u\|_{L^2}+\|P-\bar P\|_{L^p}\big).
\end{align}
\end{lemma}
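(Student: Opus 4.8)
The first bound \eqref{f6} is immediate: since $(u\cdot n)|_{\partial\Omega}=0$ and $\curl u=\omega$, Lemma \ref{lm24} with $k=0$, $v=u$ gives $\|\nabla u\|_{L^q}\le C(\|\divv u\|_{L^q}+\|\omega\|_{L^q})$. For the remaining bounds, the plan is to rewrite the momentum equation in terms of $F$ and $\omega$. Using $\Delta u=\nabla\divv u-\nabla\times\omega$, the definition \eqref{f5}, and $\nabla\bar P=0$, the second equation in \eqref{a1} becomes
\begin{align*}
\rho\dot u=\nabla F-\mu\,\nabla\times\omega\qquad\text{in }\Omega,
\end{align*}
and, since $\divv\omega=\divv\curl u=0$, taking the divergence gives $\Delta F=\divv(\rho\dot u)$ while rearranging gives $\mu\,\nabla\times\omega=\nabla F-\rho\dot u$. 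Two boundary facts, both consequences of $(\curl u\times n)|_{\partial\Omega}=0$, i.e.\ $(\omega\times n)|_{\partial\Omega}=0$ (so $\omega$ is parallel to $n$ on $\partial\Omega$), will be used: (i) for any vector field $a$, $(a\times\omega)\cdot n=(\omega\cdot n)(a\times n)\cdot n=0$ on $\partial\Omega$; and (ii) $(\nabla\times\omega)\cdot n=0$ on $\partial\Omega$, obtained from $\omega\times n=0$ together with the symmetry of the second fundamental form of $\partial\Omega$ (computed in a local orthonormal frame whose third vector is $n$, using that a tangential derivative of a boundary-vanishing quantity picks up only its normal derivative). I expect the careful verification of (ii) to be the main technical point.

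Granting (i)--(ii), the $L^2$ bound for $\nabla F$ comes from pairing $\nabla F=\rho\dot u+\mu\,\nabla\times\omega$ with $\nabla F$ and integrating by parts: $\mu\int\nabla F\cdot(\nabla\times\omega)=\mu\int\omega\cdot(\nabla\times\nabla F)+\mu\int_{\partial\Omega}(\nabla F\times\omega)\cdot n$, where both terms vanish (the first as $\nabla\times\nabla F=0$, the second by (i)); hence $\|\nabla F\|_{L^2}^2=\int\nabla F\cdot\rho\dot u\le\|\nabla F\|_{L^2}\|\rho\dot u\|_{L^2}$, i.e.\ $\|\nabla F\|_{L^2}\le\|\rho\dot u\|_{L^2}$. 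The same cancellation tested against a field decomposed by the Helmholtz decomposition $\phi=\nabla\psi+\chi$ with $\divv\chi=0$, $(\chi\cdot n)|_{\partial\Omega}=0$ (bounded on $L^{p'}$) gives $\|\nabla F\|_{L^p}\le C\|\rho\dot u\|_{L^p}$ for $p\in(1,\infty)$; equivalently this is the $L^p$ estimate for the Neumann problem $\Delta F=\divv(\rho\dot u)$, $\partial_n F=\rho\dot u\cdot n$ on $\partial\Omega$, the boundary condition being legitimate precisely because of (ii).

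For $\nabla\omega$, apply the div--curl estimate of Lemma \ref{lm25} to $\omega$ (legitimate since $(\omega\times n)|_{\partial\Omega}=0$ and $\divv\omega=0$): with $\mu\,\nabla\times\omega=\nabla F-\rho\dot u$ and the bound for $\nabla F$,
\begin{align*}
\|\nabla\omega\|_{L^p}\le C\big(\|\nabla\times\omega\|_{L^p}+\|\omega\|_{L^p}\big)
= C\big(\mu^{-1}\|\nabla F-\rho\dot u\|_{L^p}+\|\omega\|_{L^p}\big)\le C\big(\|\rho\dot u\|_{L^p}+\|\omega\|_{L^p}\big).
\end{align*}
For $p=2$ this already gives \eqref{f8}, since $\|\omega\|_{L^2}\le\|\nabla u\|_{L^2}$; for $p\in(2,6]$ one controls $\|\omega\|_{L^p}$ by \eqref{f1} (with $C_2=0$, again by $(\omega\times n)|_{\partial\Omega}=0$) using the $p=2$ bound on $\|\nabla\omega\|_{L^2}$ and Young's inequality, absorbing everything into $C(\|\rho\dot u\|_{L^p}+\|\nabla u\|_{L^2})$; this proves \eqref{f8}.

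Finally, the pointwise-in-$p$ bounds follow by interpolation. Applying \eqref{f1} to $F$ with $\|F\|_{L^2}\le C(\|\nabla u\|_{L^2}+\|P-\bar P\|_{L^2})$ (from \eqref{f5}) and $\|\nabla F\|_{L^2}\le\|\rho\dot u\|_{L^2}$ gives the stated bound for $\|F\|_{L^p}$; applying \eqref{f1} to $\omega$ (with $C_2=0$) with $\|\omega\|_{L^2}\le\|\nabla u\|_{L^2}$, $\|\nabla\omega\|_{L^2}\le C(\|\rho\dot u\|_{L^2}+\|\nabla u\|_{L^2})$, and the elementary inequality $(a+b)^\theta\le a^\theta+b^\theta$ gives the bound for $\|\omega\|_{L^p}$. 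Then \eqref{f12} follows from \eqref{f6}, the identity $\divv u=(F+P-\bar P)/(2\mu+\lambda)$ (so $\|\divv u\|_{L^p}\le C(\|F\|_{L^p}+\|P-\bar P\|_{L^p})$), and the two bounds just obtained, using $\|P-\bar P\|_{L^2}\le C\|P-\bar P\|_{L^p}$ on the bounded domain $\Omega$.
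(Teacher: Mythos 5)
Your proof is correct and follows essentially the same route as the paper: the div--curl estimates of Lemmas \ref{lm24} and \ref{lm25}, the decomposition $\rho\dot u=\nabla F-\mu\,\nabla\times\omega$ with the Neumann problem for $F$, and Gagliardo--Nirenberg interpolation for the $L^p$ bounds on $F$ and $\omega$. The only difference is cosmetic: where the paper simply cites the $L^q$ elliptic estimate for the Neumann problem (\cite[Lemma 4.27]{NS04}), you reprove it by duality and explicitly verify the boundary facts $(a\times\omega)\cdot n=0$ and $(\nabla\times\omega)\cdot n=0$ on $\partial\Omega$, which the paper leaves implicit.
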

\begin{proof}
Owing to $(u\cdot n)|_{\pa\Omega}=0$, we get \eqref{f6} from Lemma \ref{lm24}.

Taking $\eqref{a1}_2$ and the slip boundary condition $\eqref{a4}$, one finds that the effective viscous flux $F$ satisfies
\begin{align}
\begin{cases}
\Delta F=\div(\rho\dot u) &\text{in}~\Omega,\\
\f{\pa F}{\pa n}=\rho\dot u\cdot n &\text{on}~\pa\Omega.
\end{cases}
\end{align}
It follows from \cite[Lemma 4.27]{NS04} that
\begin{align}\label{ff14}
\|\na F\|_{L^q}\le C\|\rho\dot u\|_{L^q}.
\end{align}
On the other hand, we can rewrite $\eqref{a1}_2$ as
\begin{align*}
\mu\na\times w=\na F-\rho\dot u,
\end{align*}
which along with $(\omega\times n)|_{\pa\Omega}=0$, $\divv w=0$, and Lemma \ref{lm25} leads to
\begin{align}\label{ff16}
\|\na\omega\|_{L^p}\le C(\|\na\times\omega\|_{L^q}+\|\omega\|_{L^q})\le C(\|\rho\dot u\|_{L^q}+\|\omega\|_{L^q}).
\end{align}

One deduces from \eqref{f1}, \eqref{f5}, and \eqref{ff14} that, for $p\in [2, 6]$,
\begin{align}\label{z2.19}
\|F\|_{L^p}&\le C\|F\|_{L^2}^\frac{6-p}{2p}\|\nabla F\|_{L^2}^\frac{3p-6}{2p}+C\|F\|_{L^2}\nonumber\\
&\le C\|\rho\dot{u}\|_{L^2}^\frac{3p-6}{2p}
(\|\nabla u\|_{L^2}+\|P-\bar{P}\|_{L^2})^\frac{6-p}{2p}+C(\|\nabla u\|_{L^2}+\|P-\bar{P}\|_{L^2}).
\end{align}
By \eqref{f1} and \eqref{ff16}, we get that
\begin{align}\label{z2.20}
\|\omega\|_{L^p}&\le C\|\omega\|_{L^2}^\frac{6-p}{2p}\|\nabla\omega\|_{L^2}^\frac{3p-6}{2p}
+C\|\omega\|_{L^2}\nonumber\\
&\le C\big(\|\rho\dot{u}\|_{L^2}+\|\nabla u\|_{L^2}\big)^\frac{3p-6}{2p}\|\nabla u\|_{L^2}^\frac{6-p}{2p}
+C\|\nabla u\|_{L^2}\nonumber\\
&\le C\|\rho\dot{u}\|_{L^2}^\frac{3p-6}{2p}\|\nabla u\|_{L^2}^\frac{6-p}{2p}
+C\|\nabla u\|_{L^2},
\end{align}
which together with \eqref{z2.19} leads to
\begin{align}
\|F\|_{L^p}+\|\omega\|_{L^p}&\le C\big(\|\rho\dot{u}\|_{L^2}+\|P-\bar{P}\|_{L^2}+\|\nabla u\|_{L^2}\big).
\end{align}

By \eqref{ff16}, \eqref{z2.20}, Young's inequality, and H\"older's inequality, we derive that, for $p\in [2, 6]$,
\begin{align}\label{2.32}
\|\nabla\omega\|_{L^p}&\le C\big(\|\rho\dot{u}\|_{L^p}+\|\omega\|_{L^p}\big)\nonumber\\
&\le C\big(\|\rho\dot{u}\|_{L^p}+\|\rho\dot{u}\|_{L^2}
+\|\omega\|_{L^2}\big)\nonumber\\
&\le C\big(\|\rho\dot{u}\|_{L^p}+\|\nabla u\|_{L^2}\big),
\end{align}
which implies \eqref{f8}.
Moreover, we infer from \eqref{f6}, \eqref{f5}, \eqref{z2.19}, and \eqref{z2.20} that
\begin{align*}
\|\nabla u\|_{L^p}&\le C\big(\|{\rm div}\,u\|_{L^p}+\|\omega\|_{L^p}\big)\\
&\le C\big(\|F\|_{L^p}+\|P-\bar{P}\|_{L^p}+\|\omega\|_{L^p}\big)\\
&\le C\|\rho\dot{u}\|_{L^2}^\frac{3p-6}{2p}\big(\|\nabla u\|_{L^2}+\|P-\bar{P}\|_{L^2}\big)^\frac{6-p}{2p}+C\big(\|\nabla u\|_{L^2}+\|P-\bar{P}\|_{L^p}\big),
\end{align*}
as the desired \eqref{f12}.
\end{proof}

Finally, the following estimates on the material derivative of $u$ have been obtained in \cite[Lemma 2.10]{CL23}. We sketch the proof here for convenience of the reader.
\begin{lemma}\label{xu}
Under the assumption of Lemma \ref{l23}, there exists a positive constant $C$ depending only on $\Omega$ such that
\begin{gather}
\|\dot u\|_{L^6}\le C\big(\|\nabla\dot u\|_{L^2}+\|\nabla u\|_{L^2}^2\big),\label{d11}\\
\|\nabla\dot u\|_{L^2}\le C\big(\|\divv\dot u\|_{L^2}+\|\curl\dot u\|_{L^2}+\|\nabla u\|_{L^4}^2\big).\label{d12}
\end{gather}
\end{lemma}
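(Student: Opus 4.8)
The plan is to reduce both estimates to a vector field that obeys a \emph{homogeneous} normal boundary condition, for which the Gagliardo--Nirenberg inequality \eqref{f1} (with $p=6$, so that $C_2=0$) and the div--curl estimate \eqref{f6} of Lemma \ref{lm24} apply verbatim. The difficulty is that $\dot u$ itself does not satisfy $(\dot u\cdot n)|_{\partial\Omega}=0$: differentiating $u\cdot n=0$ on $\partial\Omega$ in $t$ gives $(u_t\cdot n)|_{\partial\Omega}=0$, while differentiating it tangentially and using that $u$ is tangential on $\partial\Omega$ yields \eqref{a15}, i.e. $u\cdot\nabla u\cdot n=-u\cdot\nabla n\cdot u$ on $\partial\Omega$, so $(\dot u\cdot n)|_{\partial\Omega}=-u\cdot\nabla n\cdot u$, which is generally nonzero. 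To fix this, extend the unit outward normal to a fixed smooth, time-independent vector field, still written $n$, on $\bar\Omega$ (possible since $\partial\Omega$ is smooth), set $u^\perp\triangleq-u\times n$, and introduce
\begin{align*}
w\triangleq\dot u+(u\cdot\nabla n)\times u^\perp .
\end{align*}
A short computation using $a\times(b\times c)=b(a\cdot c)-c(a\cdot b)$ gives $\big((u\cdot\nabla n)\times u^\perp\big)\cdot n=u\cdot\nabla n\cdot u$ on $\partial\Omega$, hence $(w\cdot n)|_{\partial\Omega}=0$. Writing $V\triangleq(u\cdot\nabla n)\times u^\perp$, note that $V$ is bilinear in $u$ with smooth coefficients, so pointwise $|V|\le C|u|^2$ and $|\nabla V|\le C\big(|u|\,|\nabla u|+|u|^2\big)$.

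For \eqref{d11}: since $(w\cdot n)|_{\partial\Omega}=0$, \eqref{f1} with $p=6$ yields $\|w\|_{L^6}\le C\|\nabla w\|_{L^2}\le C\|\nabla\dot u\|_{L^2}+C\|\nabla V\|_{L^2}$, whence
\begin{align*}
\|\dot u\|_{L^6}\le\|w\|_{L^6}+\|V\|_{L^6}\le C\|\nabla\dot u\|_{L^2}+C\big(\|\nabla V\|_{L^2}+\|V\|_{L^2}+\|V\|_{L^6}\big).
\end{align*}
The three $V$-terms are then absorbed into $C\|\nabla u\|_{L^2}^2$ by inserting the pointwise bounds $|V|\le C|u|^2$, $|\nabla V|\le C(|u||\nabla u|+|u|^2)$ and applying Hölder's and Young's inequalities, the Gagliardo--Nirenberg inequality \eqref{f1}, and the Poincaré inequality \eqref{z2.5} (valid since $(u\cdot n)|_{\partial\Omega}=0$).

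For \eqref{d12}: since $(w\cdot n)|_{\partial\Omega}=0$, Lemma \ref{lm24} gives $\|\nabla w\|_{L^2}\le C\big(\|\divv w\|_{L^2}+\|\curl w\|_{L^2}\big)$. Using $\divv w=\divv\dot u+\divv V$, $\curl w=\curl\dot u+\curl V$, the bound $\|\nabla\dot u\|_{L^2}\le\|\nabla w\|_{L^2}+\|\nabla V\|_{L^2}$, and $\|\divv V\|_{L^2}+\|\curl V\|_{L^2}\le C\big(\|\nabla V\|_{L^2}+\|V\|_{L^2}\big)$, we obtain
\begin{align*}
\|\nabla\dot u\|_{L^2}\le C\big(\|\divv\dot u\|_{L^2}+\|\curl\dot u\|_{L^2}+\|\nabla V\|_{L^2}+\|V\|_{L^2}\big).
\end{align*}
Finally $\|\nabla V\|_{L^2}\le C\big\||u|\,|\nabla u|\big\|_{L^2}+C\|u\|_{L^4}^2\le C\|u\|_{L^4}\|\nabla u\|_{L^4}+C\|u\|_{L^4}^2\le C\|\nabla u\|_{L^4}^2$, using the Poincaré inequality \eqref{z2.5} for $\|u\|_{L^4}$; likewise $\|V\|_{L^2}\le C\|u\|_{L^4}^2\le C\|\nabla u\|_{L^4}^2$. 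This gives \eqref{d12}.

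The main obstacle — and the only spot where the Navier-slip boundary genuinely enters — is the first step: identifying the boundary value $(\dot u\cdot n)|_{\partial\Omega}=-u\cdot\nabla n\cdot u$ and engineering the algebraic correction $V=(u\cdot\nabla n)\times u^\perp$ that cancels it while staying quadratic in $u$ with smooth coefficients, so that $\|\nabla V\|_{L^2}$, $\|V\|_{L^2}$, $\|V\|_{L^6}$ are controlled by the advertised right-hand sides. Everything after that is bookkeeping resting on the Gagliardo--Nirenberg inequality \eqref{f1}, the div--curl estimate of Lemma \ref{lm24}, and the Poincaré inequality \eqref{z2.5}.
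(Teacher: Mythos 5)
Your identification of the boundary correction $w=\dot u+(u\cdot\nabla n)\times u^\perp$ with $(w\cdot n)|_{\partial\Omega}=0$ is exactly the paper's key step, and your proof of \eqref{d12} is correct and coincides with the paper's: there the right-hand side allows $\|\nabla u\|_{L^4}^2$, and indeed $\|\nabla V\|_{L^2}\le C\|u\|_{L^4}\|\nabla u\|_{L^4}+C\|u\|_{L^4}^2\le C\|\nabla u\|_{L^4}^2$ via H\"older and the Poincar\'e inequality \eqref{z2.5}.

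However, your proof of \eqref{d11} has a genuine gap at the absorption step. You need $\|\nabla V\|_{L^2}+\|V\|_{L^6}\le C\|\nabla u\|_{L^2}^2$, and neither term obeys such a bound in three dimensions. For the gradient term, $|\nabla V|\lesssim |u|\,|\nabla u|+|u|^2$ and the product $|u|\,|\nabla u|$ is only an $L^6\times L^2$ pairing, hence lands in $L^{3/2}$, not $L^2$: every H\"older splitting of $\bigl\||u|\,|\nabla u|\bigr\|_{L^2}$ costs either $\|u\|_{L^\infty}$, $\|\nabla u\|_{L^3}$, or $\|\nabla u\|_{L^4}$, none of which is controlled by $\|\nabla u\|_{L^2}$ on a 3D domain. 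Likewise $\|V\|_{L^6}\le C\|u\|_{L^{12}}^2$, and $H^1\not\hookrightarrow L^{12}$ in 3D. So the inequality $\|w\|_{L^6}\le C\|\nabla w\|_{L^2}$ is true but does not yield \eqref{d11} by this route.

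The paper's proof circumvents this by exploiting the correct integrability level before climbing up. It applies the Poincar\'e inequality to $w$ in $L^{3/2}$, where
\begin{align*}
\|\nabla V\|_{L^{3/2}}\le C\|u\|_{L^6}\|\nabla u\|_{L^2}+C\bigl\||u|^2\bigr\|_{L^{3/2}}\le C\|\nabla u\|_{L^2}^2
\end{align*}
by H\"older with $\tfrac16+\tfrac12=\tfrac23$ and Sobolev--Poincar\'e; this gives $\|\dot u\|_{L^{3/2}}\le C(\|\nabla\dot u\|_{L^{3/2}}+\|\nabla u\|_{L^2}^2)$, which is \eqref{b5}. It then bootstraps through the Sobolev embeddings $W^{1,3/2}\hookrightarrow L^3$ and $W^{1,2}\hookrightarrow L^6$, which only require $\|\nabla\dot u\|_{L^2}$ plus the already-controlled lower $L^p$ norms of $\dot u$ and never touch $\nabla V$ in $L^2$ or $V$ in $L^6$. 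You should replace your direct $L^6$ estimate of $w$ by this two-step ``Poincar\'e at $L^{3/2}$, then Sobolev bootstrap'' argument.
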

\begin{proof}
Setting $u^\p\triangleq-u\times n$. Since $u\cdot n=0$ on $\partial\Omega$ and
\begin{align*}
a\cdot(b\times c)=(a\times b)\cdot c,\quad (a\times b)\times c=b(a\cdot c)-a(b\cdot c),
\end{align*}
one gets that
\begin{align*}
u^\p\times n=-(u\times n)\times n=u(n\cdot n)-n(u\cdot n)=u,
\end{align*}
which implies that
\begin{align*}
\dot u\cdot n=u\cdot\na u\cdot n=-u\cdot\na n\cdot u=-u\cdot\na n\cdot (u^\p\times n)=-(u\cdot\na n)\times u^\p\cdot n
\quad  ~\text{on}~\partial\Omega.
\end{align*}
Hence, we deduce that
\begin{align}\label{g24}
(\dot u+(u\cdot\na n)\times u^\p)\cdot n=0\quad  ~\text{on}~\partial\Omega,
\end{align}
which along with Poincar\'e's inequality yields that
\begin{align*}
\|\dot u+(u\cdot\na n)\times u^\p\|_{L^{\frac{3}{2}}}\le C\|\nabla(\dot u+(u\cdot\na n)\times u^\p)\|_{L^{\frac{3}{2}}},
\end{align*}
that is
\begin{align}\label{b5}
\|\dot{u}\|_{L^{\frac{3}{2}}}\le C\Big(\|\nabla\dot{u}\|_{L^{\frac{3}{2}}}+\|\nabla u\|_{L^{2}}^{2}\Big).
\end{align}
Then, by Sobolev's inequality, one has that
\begin{align*}
\|\dot{u}\|_{L^3}&\le C\Big(\|\nabla\dot{u}\|_{L^{\frac{3}{2}}}+\|\dot u\|_{L^{\frac{3}{2}}}\Big)\le C\big(\|\nabla\dot{u}\|_{L^{2}}+\|\nabla u\|_{L^2}^2\big),\nonumber\\
\|\dot{u}\|_{L^{6}}&\le C\big(\|\nabla\dot{u}\|_{L^{2}}+\|\dot u\|_{L^{3}}\big)\le C\big(\|\nabla\dot{u}\|_{L^{2}}+\|\nabla u\|_{L^2}^2\big),
\end{align*}
which gives \eqref{d11}.

Moreover, it follows from \eqref{g24}, \eqref{ff5}, and Lemma \ref{l23} that
\begin{align*}
\|\na\dot u\|_{L^2}&\le C\big(\|\div\dot u\|_{L^2}+\|\curl\dot u\|_{L^2}+\|\na((u\cdot\na n)\times u^\p)\|_{L^2}\big) \\
&\le C\big(\|\div\dot u\|_{L^2}+\|\curl\dot u\|_{L^2}+\|\na u\|_{L^4}^2\big),
\end{align*}
as the desired \eqref{d12}.
\end{proof}


\section{The proofs of Theorems \ref{thm1} and \ref{thm2}}\label{sec3}

This section is devoted to proving Theorems \ref{thm1} and \ref{thm2}.

\begin{lemma}\label{l31}
Under the assumptions of Theorem \ref{thm1}, there exist two positive constants $C_3$ and $\eta_3$, which are dependent on $K$,
but independent of $t$, such that for any $t\ge 0$,
\begin{align}\label{w1}
\|\rho(\cdot, t)-\bar\rho_0\|_{L^2}+\|\sqrt\rho u(\cdot, t)\|_{L^2}\le C_3e^{-\eta_3t}.
\end{align}
\end{lemma}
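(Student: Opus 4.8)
\emph{Proof plan for Lemma \ref{l31}.} The idea is to build a Lyapunov functional comparable to $\|\rho-\bar\rho_0\|_{L^2}^2+\|\sqrt\rho u\|_{L^2}^2$ whose dissipation rate is comparable to the functional itself, and then close by Gronwall's inequality. Since $(u\cdot n)|_{\partial\Omega}=0$, integrating the continuity equation in \eqref{a1} gives $\bar\rho(t)=\bar\rho_0$ for all $t$, so $\int(\rho-\bar\rho_0)\,dx=0$ and, $\bar P$ being the mean of $P$, also $\int(P-\bar P)\,dx=0$. Testing the momentum equation in \eqref{a1} by $u$, writing $\Delta u=\na\divv u-\curl\curl u$ and using the boundary conditions \eqref{a4} (which kill every boundary integral), one obtains the relative--energy identity
\begin{align*}
\f{d}{dt}\int\Big(\f12\rho|u|^2+G(\rho)\Big)\,dx+(2\mu+\lambda)\|\divv u\|_{L^2}^2+\mu\|\curl u\|_{L^2}^2=0 .
\end{align*}
From $0\le\rho\le\hat\rho$ (by \eqref{a9}) and $P(\rho)=\rho^\gamma$ one has $c|\rho-\bar\rho_0|^2\le G(\rho)\le C|\rho-\bar\rho_0|^2$, while Lemma \ref{lm24} together with $(u\cdot n)|_{\partial\Omega}=0$ gives $\|\divv u\|_{L^2}^2+\|\curl u\|_{L^2}^2\ge c\|\na u\|_{L^2}^2$. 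Hence $E_0(t)\triangleq\int(\f12\rho|u|^2+G(\rho))\,dx$ satisfies $E_0(t)\approx\|\sqrt\rho u\|_{L^2}^2+\|\rho-\bar\rho_0\|_{L^2}^2$ and $\f{d}{dt}E_0(t)+c\|\na u\|_{L^2}^2\le0$.

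Since this dissipation does not control $\|\rho-\bar\rho_0\|_{L^2}^2$, I recover it through the Bogovskii operator. Put $\varphi\triangleq\mathcal{B}[\rho-\bar\rho_0]$ (legitimate as $\rho-\bar\rho_0$ has zero mean); by Lemma \ref{l22}, $\varphi|_{\partial\Omega}=0$ and $\|\na\varphi\|_{L^p}\le C\|\rho-\bar\rho_0\|_{L^p}\le C$ for $p\in[2,6]$ because of \eqref{a9}. Testing the momentum equation in \eqref{a1} by $\varphi$ and integrating by parts (no boundary terms, as $\varphi$ vanishes on $\partial\Omega$), the pressure term equals $-\int(P-\bar P)\divv\varphi\,dx=-\int(P-\bar P)(\rho-\bar\rho_0)\,dx\le-c\|\rho-\bar\rho_0\|_{L^2}^2$, where the last bound uses monotonicity of $s\mapsto s^\gamma$ on $[0,\hat\rho]$. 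The viscous term is $\le C\|\na u\|_{L^2}\|\na\varphi\|_{L^2}\le C\|\na u\|_{L^2}\|\rho-\bar\rho_0\|_{L^2}$, and the convective term is $\le C\|\sqrt\rho u\|_{L^2}\|u\|_{L^6}\|\na\varphi\|_{L^3}\le C\|\na u\|_{L^2}^2$, using $\|u\|_{L^6}\le C\|\na u\|_{L^2}$ (Lemma \ref{l21}), $\|\na\varphi\|_{L^3}\le C$, and $\|\sqrt\rho u\|_{L^2}\le\hat\rho^{1/2}\|u\|_{L^2}\le C\|\na u\|_{L^2}$ (Lemma \ref{l23} and \eqref{a9}). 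The only subtle term is the one with the time derivative: $\int(\rho u)_t\cdot\varphi\,dx=\f{d}{dt}\int\rho u\cdot\varphi\,dx-\int\rho u\cdot\varphi_t\,dx$, where $\varphi_t=\mathcal{B}[\rho_t]=-\mathcal{B}[\divv(\rho u)]$; since $(\rho u\cdot n)|_{\partial\Omega}=0$, the refined estimate of Lemma \ref{l22} gives $\|\varphi_t\|_{L^2}\le C\|\rho u\|_{L^2}\le C\hat\rho^{1/2}\|\sqrt\rho u\|_{L^2}$, so $|\int\rho u\cdot\varphi_t\,dx|\le C\|\sqrt\rho u\|_{L^2}^2\le C\|\na u\|_{L^2}^2$, and $|\int\rho u\cdot\varphi\,dx|\le\|\rho u\|_{L^2}\|\varphi\|_{L^2}\le C(\|\sqrt\rho u\|_{L^2}^2+\|\rho-\bar\rho_0\|_{L^2}^2)$ via $\|\varphi\|_{L^2}\le C\|\na\varphi\|_{L^2}$. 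Collecting the estimates and absorbing a small multiple of $\|\rho-\bar\rho_0\|_{L^2}^2$, one arrives at
\begin{align*}
\f{d}{dt}\int\rho u\cdot\varphi\,dx+c\|\rho-\bar\rho_0\|_{L^2}^2\le C\|\na u\|_{L^2}^2 .
\end{align*}

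Finally, for a sufficiently small $\delta>0$ set $\tilde\E(t)\triangleq E_0(t)+\delta\int\rho u\cdot\varphi\,dx$; the bound on $\int\rho u\cdot\varphi\,dx$ shows $\tilde\E(t)\approx\|\sqrt\rho u\|_{L^2}^2+\|\rho-\bar\rho_0\|_{L^2}^2$, and adding $\delta$ times the last inequality to $\f{d}{dt}E_0+c\|\na u\|_{L^2}^2\le0$ with $\delta$ chosen so that $C\delta$ is below the dissipation coefficient gives
\begin{align*}
\f{d}{dt}\tilde\E(t)+c\big(\|\na u\|_{L^2}^2+\|\rho-\bar\rho_0\|_{L^2}^2\big)\le0 .
\end{align*}
As $\|\na u\|_{L^2}^2\ge c\|u\|_{L^2}^2\ge c\|\sqrt\rho u\|_{L^2}^2$ (Lemma \ref{l23} and \eqref{a9}), the left-hand dissipation dominates $\tilde\E(t)$, so $\f{d}{dt}\tilde\E(t)+\eta\,\tilde\E(t)\le0$ for some $\eta>0$ depending only on $\hat\rho,\bar\rho_0,\mu,\lambda,\gamma,\Omega$; Gronwall's inequality and $\tilde\E(0)\le CK^2$ then yield \eqref{w1} with $\eta_3=\eta/2$.

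I expect the principal obstacle to be the time-derivative term $\int(\rho u)_t\cdot\varphi\,dx$ in the Bogovskii test: moving the time derivative onto the test function produces $\varphi_t=-\mathcal{B}[\divv(\rho u)]$, and it is essential that Lemma \ref{l22} supplies the bound $\|\mathcal{B}[\divv g]\|_{L^2}\le C\|g\|_{L^2}$ for vector fields $g$ with $(g\cdot n)|_{\partial\Omega}=0$, which applies here precisely because $(u\cdot n)|_{\partial\Omega}=0$. A secondary point needing care is the pair of structural inequalities $G(\rho)\approx|\rho-\bar\rho_0|^2$ and $(P-\bar P)(\rho-\bar\rho_0)\ge c|\rho-\bar\rho_0|^2$ on $[0,\hat\rho]$, both of which rely on the uniform upper bound \eqref{a9}, with equivalence constants depending on $\hat\rho$ (and on $\bar\rho_0$, $\gamma$).
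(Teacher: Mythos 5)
Your proposal is correct and follows essentially the same route as the paper: the basic energy identity for $\int(\tfrac12\rho|u|^2+G(\rho))\,dx$ with dissipation $\|\divv u\|_{L^2}^2+\|\curl u\|_{L^2}^2$, recovery of $\|\rho-\bar\rho_0\|_{L^2}^2$ by testing the momentum equation with $\mathcal{B}[\rho-\bar\rho_0]$ (including the key use of $\|\mathcal{B}[\divv(\rho u)]\|_{L^2}\le C\|\rho u\|_{L^2}$ for the term $\varphi_t$), and closure via Lemma \ref{lm24} and the Poincar\'e inequality of Lemma \ref{l23}. The one slip is a sign: since the pressure term contributes $+\int(P-\bar P)(\rho-\bar\rho_0)\,dx$ to $\frac{d}{dt}\int\rho u\cdot\varphi\,dx$, the Bogovskii step actually yields $-\frac{d}{dt}\int\rho u\cdot\varphi\,dx+c\|\rho-\bar\rho_0\|_{L^2}^2\le C\|\nabla u\|_{L^2}^2$ (the paper's \eqref{w13}), so the Lyapunov functional must be $E_0(t)-\delta\int\rho u\cdot\varphi\,dx$ rather than $E_0(t)+\delta\int\rho u\cdot\varphi\,dx$ (exactly the paper's $\cm_1$); as $\left|\int\rho u\cdot\varphi\,dx\right|$ is controlled by the energy either way, nothing else in your argument changes.
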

\begin{proof}[Proof]
Multiplying $\eqref{a1}_1$ and $\eqref{a1}_2$ by $G'(\rho)$ and $u$, respectively, and integration by parts, one gets that
\begin{align}\label{w8}
&\frac{d}{dt}\int\Big(\frac12\rho|u|^2+G(\rho)\Big)dx+(2\mu+\lambda)\int(\divv u)^2dx+\mu\int|\curl u|^2dx\nonumber\\
&=-\int\div (\rho u)G'(\rho)dx-\int u\cdot\nabla(P-\bar P)dx\nonumber\\
&=\int \rho u\cdot\nabla G'(\rho)dx-\int u\cdot\nabla Pdx=0,
\end{align}
due to
\begin{gather}
\bar\rho=\f{1}{|\Omega|}\int\rho(x, t)dx\equiv\f{1}{|\Omega|}\int\rho_0dx=\bar\rho_0,\label{w2}\\
-\Delta u=-\nabla\divv u+\curl\curl u.\label{z2}
\end{gather}

By \eqref{a7} and \eqref{w2}, there exists a positive constant $\tilde C<1$ depending only on $\gamma$, $\bar\rho_0$, and $\hat\rho$ such that, for any $\rho\in [0, 2\hat\rho]$,
\begin{gather}
\tilde C^2(\rho-\bar\rho_0)^2\le \tilde CG(\rho)\le (P-P(\bar\rho_0))(\rho-\bar\rho_0),\label{w9}\\
\|P-\bar P\|_{L^2}^2\le C\int G(\rho)dx.\label{w10}
\end{gather}
Multiplying $\eqref{a1}_2$ by $\cb[\rho-\bar\rho_0]$, we deduce from \eqref{a8} and Lemma \ref{l21} that
\begin{align*}
\int(P-P(\bar\rho_0))(\rho-\bar\rho_0)dx
&=\frac{d}{dt}\int\rho u\cdot\cb[\rho-\bar\rho_0]dx-\int\rho u\cdot\nabla\cb[\rho-\bar\rho_0]\cdot udx
-\int\rho u\cdot\cb[\rho_t]dx\nonumber\\
&\quad+\mu\int\nabla u\cdot\nabla\cb[\rho-\bar\rho_0]dx+(\mu+\lambda)\int(\rho-\bar\rho_0)\divv udx\nonumber\\
&=\frac{d}{dt}\int\rho u\cdot\cb[\rho-\bar\rho_0]dx
+C\|\sqrt\rho u\|_{L^4}^2\|\rho-\bar\rho_0\|_{L^2}+C\|\rho u\|_{L^2}^2\nonumber\\
&\quad+C\|\rho-\bar\rho_0\|_{L^2}\|\nabla u\|_{L^2}\nonumber\\
&\le \frac{d}{dt}\int\rho u\cdot\cb[\rho-\bar\rho_0]dx
+C\|\sqrt\rho u\|_{L^2}^\frac12\|\sqrt\rho u\|_{L^6}^\frac32\|\rho-\bar\rho_0\|_{L^2}\nonumber\\
&\quad+C\|\rho-\bar\rho_0\|_{L^2}\|\nabla u\|_{L^2}+C\|\nabla u\|_{L^2}^2
\nonumber\\
&\le \frac{d}{dt}\int\rho u\cdot\cb[\rho-\bar\rho_0]dx+\frac{\tilde C^2}{2}\|\rho-\bar\rho_0\|_{L^2}^2+C\|\nabla u\|_{L^2}^2,
\end{align*}
which along with \eqref{w9} and \eqref{w10} leads to
\begin{align*}
\tilde C^2\|\rho-\bar\rho_0\|_{L^2}^2&\le \tilde C\int G(\rho)dx\le \int(P-P(\bar\rho_0))(\rho-\bar\rho_0)dx\nonumber\\
&\le \frac{d}{dt}\int\rho u\cdot\cb[\rho-\bar\rho_0]dx+\frac{\tilde C^2}{2}\|\rho-\bar\rho_0\|_{L^2}^2+C\|\nabla u\|_{L^2}^2.
\end{align*}
Thus, according to \eqref{f6}, we arrive at
\begin{align}\label{w13}
-\frac{d}{dt}\int\rho u\cdot\cb[\rho-\bar\rho_0]dx+\frac{\tilde C^2}{2}\|\rho-\bar\rho_0\|_{L^2}^2
\le C\big(\|\divv u\|_{L^2}^2+\|\curl u\|_{L^2}^2\big).
\end{align}

Now we choose a positive constant $D_1$ suitably large and define the temporal energy functional
\begin{align*}
\cm_1(t)=D_1\int\Big(\frac12\rho|u|^2+G(\rho)\Big)dx-\int\rho u\cdot\cb[\rho-\bar\rho_0]dx.
\end{align*}
It follows from \eqref{w9} that
\begin{align*}
\left|\int\rho u\cdot\cb[\rho-\bar\rho_0]dx\right|\le \tilde C_2\Big(\frac12\|\sqrt\rho u\|_{L^2}^2+\int G(\rho)dx\Big).
\end{align*}
Hence, $\cm_1(t)$ is equivalent to $\|(\rho-\bar\rho_0, \sqrt{\rho}u)\|_{L^2}^2$ provided we choose $D_1$ large enough. Taking a linear combination of \eqref{w8} and \eqref{w13}, we deduce that, for any $t\ge 0$,
\begin{align}\label{w15}
\frac{d}{dt}\cm_1(t)+\frac{\cm_1(t)}{D_1}+\frac{\|\divv u\|_{L^2}^2+\|\curl u\|_{L^2}^2}{D_1}\le 0.
\end{align}
Integrating \eqref{w15} with respect to the time over $[0, t]$ gives \eqref{w1}.
\end{proof}

Next, we establish the time-decay rate of $\|\nabla u\|_{L^2}$.
\begin{lemma}
Under the assumptions of Theorem \ref{thm1}, there exist two positive constant $C_4$ and $\eta_4$, which are dependent on $K$, but independent of $t$, such that for any $t\ge 0$,
\begin{align}\label{w17}
\|\nabla u(\cdot, t)\|_{L^2}\le C_4e^{-\eta_4t}.
\end{align}
\end{lemma}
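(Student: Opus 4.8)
The plan is to derive a Lyapunov-type differential inequality for a functional equivalent to $\|\nabla u\|_{L^2}^2$ and then close the argument using Gronwall's inequality together with the already-established exponential decay of $\|(\rho-\bar\rho_0,\sqrt\rho u)\|_{L^2}$ from Lemma \ref{l31}. The natural object to work with is the \emph{effective viscous flux} $F$ and the \emph{vorticity} $\omega$ introduced in \eqref{f5}. First I would multiply $\eqref{a1}_2$ by $\dot u$ (the material derivative) and integrate over $\Omega$; using $(u\cdot n)|_{\partial\Omega}=0$ and the relation \eqref{g24} from Lemma \ref{xu} to handle the boundary terms, one obtains an identity of the schematic form $\frac{d}{dt}\bigl(\frac{2\mu+\lambda}{2}\|\divv u\|_{L^2}^2+\frac{\mu}{2}\|\curl u\|_{L^2}^2 - \int (P-\bar P)\divv u\,dx\bigr) + \|\sqrt\rho\dot u\|_{L^2}^2 \le \text{(controllable terms)}$, where the controllable terms involve $\|\nabla u\|_{L^2}$, $\|P-\bar P\|_{L^2}$, $\|\nabla u\|_{L^3}^3$-type cubic quantities, and the boundary contribution $\int_{\partial\Omega}(u\cdot\nabla n\cdot u)F\,dS$. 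The point is that $\int(P-\bar P)\divv u\,dx$, $\|P-\bar P\|_{L^2}$, and $\|\nabla u\|_{L^2}$ all decay exponentially by Lemma \ref{l31} (recall \eqref{w9}, \eqref{w10} give $\|P-\bar P\|_{L^2}^2\le C\int G(\rho)dx$ which is part of $\cm_1(t)$), so once the cubic and boundary terms are absorbed, this will be the promised inequality \eqref{lwz3}.

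Next I would estimate the cubic term $\int \rho u\cdot\nabla u\cdot\dot u\,dx$ (and similar) appearing on the right-hand side. Writing $\dot u = u_t + u\cdot\nabla u$ and using Hölder, Lemmas \ref{l21} and \ref{l24}, one bounds these by $C\|\nabla u\|_{L^2}^{1/2}\|\nabla u\|_{L^6}^{3/2}\|\sqrt\rho\dot u\|_{L^2}+\cdots$, and then uses \eqref{f12} to express $\|\nabla u\|_{L^6}$ in terms of $\|\rho\dot u\|_{L^2}$, $\|\nabla u\|_{L^2}$, and $\|P-\bar P\|_{L^6}$. The crucial observation — and the step I expect to be the main obstacle — is that because $\|\nabla u(\cdot,t)\|_{L^2}$ is already known to be \emph{small} for $t$ large (indeed it tends to $0$), the coefficient in front of $\|\sqrt\rho\dot u\|_{L^2}^2$ arising from these nonlinear terms becomes arbitrarily small, so that term can be absorbed into the good dissipation $\|\sqrt\rho\dot u\|_{L^2}^2$ on the left. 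For the boundary term $\int_{\partial\Omega}(u\cdot\nabla n\cdot u)F\,dS$ I would use the trace/Sobolev inequality on $\partial\Omega$ to get $\le C\|u\|_{L^4(\partial\Omega)}^2\|F\|_{L^2(\partial\Omega)}\le C\|u\|_{H^1}^{?}\|F\|_{H^1}^{?}$, then again use that $\|\nabla u\|_{L^2}$ and $\|F\|_{L^2}$ are small, and \eqref{f8} to control $\|\nabla F\|_{L^2}$ by $\|\rho\dot u\|_{L^2}+\|\nabla u\|_{L^2}$; the smallness converts this into something absorbable. Some care is needed here since $F$ does not vanish on $\partial\Omega$, but $\|F\|_{L^2(\partial\Omega)}$ is still controlled by $\|F\|_{L^2}^{1/2}\|F\|_{H^1}^{1/2}$.

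Putting this together, I would define a temporal functional such as
\begin{align*}
\cm_2(t)=\frac{2\mu+\lambda}{2}\|\divv u\|_{L^2}^2+\frac{\mu}{2}\|\curl u\|_{L^2}^2-\int (P-\bar P)\divv u\,dx+\delta\cm_1(t),
\end{align*}
with $\delta>0$ chosen so that $\cm_2(t)$ is equivalent to $\|\nabla u\|_{L^2}^2$ (using \eqref{f6}, \eqref{w9}, \eqref{w10}, and Young's inequality to absorb the cross term $\int(P-\bar P)\divv u\,dx$). Combining the inequality \eqref{lwz3} with \eqref{w15}, and choosing $t_0$ large enough that $\|\nabla u(\cdot,t)\|_{L^2}$ is below the smallness threshold for $t\ge t_0$, one arrives at a Lyapunov inequality of the form $\frac{d}{dt}\cm_2(t)+\eta\,\cm_2(t)+\|\sqrt\rho\dot u\|_{L^2}^2\le Ce^{-\eta_3 t}$ valid for $t\ge t_0$ (this is the crucial \eqref{b37}). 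Gronwall's inequality then yields $\cm_2(t)\le Ce^{-\eta t}$ for $t\ge t_0$ with some $\eta>0$ (shrinking $\eta$ below $\min\{\eta_3,\text{decay rate}\}$ if necessary), hence $\|\nabla u(\cdot,t)\|_{L^2}^2\le Ce^{-\eta t}$ for $t\ge t_0$; on the bounded interval $[0,t_0]$ the bound follows from $\|\nabla u\|_{H^1}\le C(K)$ (the regularity in \eqref{a8} together with the assumed uniform bound, or simply continuity). Adjusting the constant $C_4$ to cover $[0,t_0]$ gives \eqref{w17}. The main difficulty, to reiterate, is managing the boundary integrals produced by the Navier-slip condition and verifying that the smallness of $\|\nabla u\|_{L^2}$ for large time genuinely suffices to absorb every bad nonlinear and boundary term into the dissipation — this is precisely where the new observation \eqref{a15}/\eqref{g24} and the structure of $F$ on $\partial\Omega$ are essential.
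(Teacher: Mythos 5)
Your overall strategy coincides with the paper's: the energy identity obtained by testing the momentum equation with $\dot u$, the boundary identity \eqref{a15} together with the trace theorem and the effective viscous flux $F$ to control the boundary integral $\int_{\partial\Omega}\divv u\,(u\cdot\nabla n\cdot u)\,dS$, the interpolation \eqref{f12} to absorb the cubic term $\|\nabla u\|_{L^3}^3$ into the dissipation $\|\sqrt\rho\dot u\|_{L^2}^2$, and the Lyapunov functional $\cm_2$ built from $\Psi$ and $\cm_1$ are exactly the ingredients of the paper's proof of \eqref{w17}.

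There is, however, one genuine gap in how you close the argument: you assert that $\|\nabla u(\cdot,t)\|_{L^2}$ is ``already known to be small for $t$ large (indeed it tends to $0$)'' and that one may ``choose $t_0$ large enough that $\|\nabla u(\cdot,t)\|_{L^2}$ is below the smallness threshold for $t\ge t_0$.'' Neither fact is available before the present lemma is proved --- pointwise-in-time smallness of $\|\nabla u\|_{L^2}$ is essentially the conclusion you are after, so invoking it is circular. What Lemma \ref{l31} (via \eqref{w8} and \eqref{w15}) actually supplies is only $\int_0^\infty\|\nabla u\|_{L^2}^2\,dt<\infty$, which guarantees smallness of the functional at \emph{some} single time $T_1$, not on a whole half-line. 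The missing step is the continuation argument the paper uses: first find $T_1$ with $\cm_2(T_1)<\delta_1$; then suppose the bound $\Psi(t)+D_2\|\rho-\bar\rho_0\|_{L^2}^2<2\delta_1$ fails and take a minimal $T_2>T_1$ where it equals $2\delta_1$; on $[T_1,T_2)$ the smallness holds, so the Lyapunov inequality \eqref{b37} is valid there, and integrating it gives $\cm_2(T_2)\le\cm_2(T_1)<\delta_1$, a contradiction. Only after this bootstrap is \eqref{b36} available for all $t\ge T_1$, making the absorption of $\|\nabla u\|_{L^2}^6$ into $\|\nabla u\|_{L^2}^2$ legitimate. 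With that continuation step inserted, your argument coincides with the paper's proof.
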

\begin{proof}[Proof]
By the definition of the material derivative and \eqref{z2}, we can rewrite $\eqref{a1}_2$ as
\begin{align}\label{w18}
\rho\dot u+\nabla(P-\bar P)=(2\mu+\lambda)\nabla\divv u-\mu\curl\curl u.
\end{align}
Multiplying \eqref{w18} by $\dot u$ and integration by parts, one gets that
\begin{align}\label{w19}
\int\rho|\dot u|^2dx=-\int\dot u\cdot\nabla(P-\bar P)dx+(2\mu+\lambda)\int\nabla\divv u\cdot\dot udx
-\mu\int\curl\curl u\cdot\dot udx
\triangleq\sum_{i=1}^3I_i.
\end{align}

By $\eqref{a1}_1$ and $P=\rho^\gamma$, we have
\begin{align*}
P_t+\div(Pu)+(\gamma-1)P\divv u=0,
\end{align*}
which gives that
\begin{equation}\label{z3.26}
(P-\Bar{P})_t+u\cdot\nabla(P-\Bar{P})+\gamma P\divv u-(\gamma-1)\overline{P\divv u}=0.
\end{equation}
Owing to
\begin{align*}
\overline{P\divv u}=\frac{1}{|\Omega|}\int \rho^\gamma\divv udx\leq C(\hat{\rho},\gamma,\Omega)\left\|\divv u\right\|_{L^2},
\end{align*}
then it follows from \eqref{z3.26} that
\begin{align}\label{z4}
I_1&=-\int u_t\cdot\nabla(P-\bar P)dx-\int u\cdot\nabla u\cdot\nabla(P-\bar P)dx\nonumber\\
&=\frac{d}{dt}\int(P-\bar P)\divv udx-\int\divv u(P-\bar P)_tdx-\int u\cdot\nabla u\cdot\nabla (P-\bar P)dx\nonumber\\
&=\frac{d}{dt}\int(P-\bar P)\divv udx-\gamma\int P(\divv u)^2dx-\int u\cdot\na(P-\bar P)\divv udx\nonumber\\
&\quad+(\gamma-1)\int\divv u\overline{P\divv u}dx-\int u\cdot\nabla u\cdot\nabla Pdx\nonumber\\
&=\frac{d}{dt}\int(P-\bar P)\divv udx-\gamma\int P(\divv u)^2dx+\int (P-\bar P)\nabla u:\nabla udx\nonumber\\
&\quad+(\gamma-1)\int\divv u\overline{P\divv u}dx-\int_{\partial\Omega}Pu\cdot\nabla u\cdot ndS\nonumber\\
&\le \frac{d}{dt}\int(P-\bar P)\divv udx+C\|\nabla u\|_{L^2}^2,
\end{align}
where we have used
\begin{align*}
&\int u\cdot\na(P-\bar P)\divv udx-\int u\cdot\nabla u\cdot\nabla (P-\bar P)dx \notag \\
&=\int u^i\pa_i(P-\bar P)\pa_ju^jdx-\int u^i\partial_iu^j\partial_j(P-\bar P)dx\nonumber\\
&=-\int\pa_iu^i\pa_ju^j(P-\bar P)dx-\int(P-\bar P)\pa_j\pa_iu^ju^idx-\int u^i\partial_iu^j\partial_j(P-\bar P)dx\nonumber\\
&=-\int (P-\bar P)(\divv u)^2dx+\int (P-\bar P)\partial_iu^j\partial_ju^idx-\int_{\partial\Omega}(P-\bar P)u^i\partial_iu^jn^jdS\nonumber\\
&=-\int (P-\bar P)(\divv u)^2dx+\int (P-\bar P)\nabla u:\nabla udx+\int_{\partial\Omega}(P-\bar P)u\cdot\nabla n\cdot udS  \\
&\le C\int |P-\bar P||\nabla u|^2dx+C\int_{\partial\Omega}|u|^2dS
\le C\|\nabla u\|_{L^2}^2,
\end{align*}
due to \eqref{a4}, \eqref{a15}, the trace theorem, \eqref{z2.5}, and \eqref{a9}.

By virtue of \eqref{a4} and \eqref{a15}, we infer from integration by parts that
\begin{align}\label{z5}
I_2&=(2\mu+\lambda)\int_{\partial\Omega}{\rm div}\,u(\dot{u}\cdot n)dS-(2\mu+\lambda)\int{\rm div}\,u{\rm div}\,\dot{u}dx\nonumber\\
&=(2\mu+\lambda)\int_{\partial\Omega}{\rm div}\,u(u\cdot\nabla u\cdot n)dS-\frac{2\mu+\lambda}{2}\frac{d}{dt}\int({\rm div}\,u)^2dx
-(2\mu+\lambda)\int{\rm div}\,u{\rm div}\,(u\cdot\nabla u)dx\nonumber\\
&=-\frac{2\mu+\lambda}{2}\frac{d}{dt}\int(\divv u)^2dx
-(2\mu+\lambda)\int_{\partial\Omega}\divv u(u\cdot\nabla n\cdot u)dS
-(2\mu+\lambda)\int{\rm div}\,u\partial_j(u^i\partial_iu^j)dx\nonumber\\
&=-\frac{2\mu+\lambda}{2}\frac{d}{dt}\int({\rm div}\,u)^2dx
-(2\mu+\lambda)\int_{\partial\Omega}{\rm div}\,u(u\cdot\nabla n\cdot u)dS\nonumber\\
&\quad-(2\mu+\lambda)\int{\rm div}\,u\nabla u:\nabla udx-(2\mu+\lambda)\int{\rm div}\,uu^j\partial_j\partial_{i}u^idx\nonumber\\
&=-\frac{2\mu+\lambda}{2}\frac{d}{dt}\int({\rm div}\,u)^2dx
-(2\mu+\lambda)\int_{\partial\Omega}{\rm div}\,u(u\cdot\nabla n\cdot u)dS\nonumber\\
&\quad-(2\mu+\lambda)\int{\rm div}\,u\nabla u:\nabla udx+\frac{2\mu+\lambda}{2}\int({\rm div}\,u)^3dx
\nonumber\\
&\le -\frac{2\mu+\lambda}{2}\frac{d}{dt}\int({\rm div}\,u)^2dx
+\frac14\|\sqrt{\rho}\dot{u}\|_{L^2}^2+C\big(\|\nabla u\|_{L^2}^4+\|\nabla u\|_{L^2}^2+\|\nabla u\|_{L^3}^3\big),
\end{align}
where we have used
\begin{align*}
\int{\rm div}\,uu^j\partial_j\partial_iu^idx&=-\int\partial_j(\partial_ku^ku^j)\partial_iu^idx=-\int\partial_j\partial_ku^ku^j\partial_iu^idx
-\int\div  u\partial_ju^j\partial_iu^idx,
\end{align*}
and
\begin{align}\label{3.24}
&\Big|-(2\mu+\lambda)\int_{\partial\Omega}{\rm div}\,u(u\cdot\nabla n\cdot u)dS\Big|\nonumber\\
&=\Big|-\int_{\partial\Omega}\Big(F+(P-\bar P)\Big)(u\cdot\nabla n\cdot u)dS\Big|\nonumber\\
&\le \Big|\int_{\partial\Omega}F(u\cdot\nabla n\cdot u)dS\Big|+\Big|\int_{\partial\Omega}(P-\bar P)(u\cdot\nabla n\cdot u)dS\Big|
\nonumber\\
&\le C\int_{\partial\Omega}|F||u|^2dS+C\int_{\partial\Omega}|u|^2dS
\nonumber\\
&\le C\big(\|\nabla F\|_{L^2}\|u\|_{L^4}^2+\|F\|_{L^6}\|u\|_{L^3}\|\nabla u\|_{L^2}
+\|F\|_{L^2}\|u\|_{L^4}^2\big)+C\|\nabla u\|_{L^2}^2\nonumber\\
&\le C\|F\|_{H^1}\|u\|_{H^1}^2+C\|\nabla u\|_{L^2}^2+C\|\nabla u\|_{L^2}^4\nonumber\\
&\le \frac14\|\sqrt{\rho}\dot{u}\|_{L^2}^2
+C\big(\|\nabla u\|_{L^2}^4+\|\nabla u\|_{L^2}^2\big),
\end{align}
due to the trace theorem, \eqref{z2.5}, \eqref{f6}, and \eqref{f8}.

Noting that
\begin{align*}
\int{\rm curl}\,u\cdot(u^i\partial_i{\rm curl}\,u)dx=-\int{\rm curl}\,u\cdot(u^i\partial_i{\rm curl}\,u)dx-\int|{\rm curl}\,u|^2{\rm div}\,udx,
\end{align*}
we have
\begin{align*}
\int{\rm curl}\,u\cdot(u^i\partial_i{\rm curl}\,u)dx
=-\frac12\int|{\rm curl}\,u|^2{\rm div}\,udx.
\end{align*}
This implies that
\begin{align*}
\int{\rm curl}\,u\cdot{\rm curl}\,(u\cdot\nabla u)dx&=\int{\rm curl}\,u\cdot{\rm curl}\,(u^i\partial_i u)dx
=\int{\rm curl}\,u\cdot\big(u^i{\rm curl}\,\partial_iu+\nabla u^i\times\partial_iu\big)dx\nonumber\\
&=-\int\partial_i({\rm curl}\,u u^i){\rm curl}\,udx+\int(\nabla u^i\times\partial_iu)\cdot{\rm curl}\,udx\nonumber\\
&=-\frac12\int|{\rm curl}\,u|^2{\rm div}\,udx+\int(\nabla u^i\times\partial_iu)\cdot{\rm curl}\,udx,
\end{align*}
which combined with \eqref{a4} and integration by parts leads to
\begin{align}\label{3.26}
I_3&=-\mu\int{\rm curl}\,u\cdot{\rm curl}\,\dot{u}dx\nonumber\\
&=-\frac{\mu}{2}\frac{d}{dt}\int|{\rm curl}\,u|^2dx
-\mu\int{\rm curl}\,u\cdot{\rm curl}\,(u\cdot\nabla u)dx\nonumber\\
&=-\frac{\mu}{2}\frac{d}{dt}\int|{\rm curl}\,u|^2dx
-\mu\int(\nabla u^i\times\partial_iu)\cdot{\rm curl}\,udx
+\frac{\mu}{2}\int|{\rm curl}\,u|^2{\rm div}\,udx\nonumber\\
&\le -\frac{\mu}{2}\frac{d}{dt}\int|{\rm curl}\,u|^2dx
+C\|\nabla u\|_{L^3}^3.
\end{align}
Substituting \eqref{z4}, \eqref{z5}, and \eqref{3.26} into \eqref{w19} yields that
\begin{align}\label{w30}
\frac{d}{dt}\Psi(t)+\frac34\|\sqrt{\rho}\dot{u}\|_{L^2}^2
\le C\big(\|\nabla u\|_{L^3}^3+\|\nabla u\|_{L^2}^4+\|\nabla u\|_{L^2}^2\big),
\end{align}
where
\begin{align*}
\Psi(t)&\triangleq\int\Big[\frac12\big((2\mu+\lambda)(\divv u)^2+\mu|\curl u|^2\big)-(P-\bar P)\divv u\Big]dx.
\end{align*}

By \eqref{f12}, one has that
\begin{align}\label{w31}
\|\nabla u\|_{L^3}^3&\le C\|\sqrt{\rho}\dot{u}\|_{L^2}^\frac{3}{2}\big(\|\nabla u\|_{L^2}+\|P-\bar P\|_{L^2}\big)^\frac32
+C\big(\|\nabla u\|_{L^2}^3+\|P-\bar P\|_{L^3}^3\big)\nonumber\\
&\le  C\|\sqrt{\rho}\dot{u}\|_{L^2}^\frac{3}{2}\big(\|\nabla u\|_{L^2}+\|\rho-\bar\rho_0\|_{L^2}\big)^\frac32
+C\big(\|\nabla u\|_{L^2}^3
+\|\rho-\bar\rho_0\|_{L^2}^2\big)\nonumber\\
&\le \f14\|\sqrt{\rho}\dot{u}\|_{L^2}^2+C\big(\|\nabla u\|_{L^2}^2+\|\nabla u\|_{L^2}^6\big)
+C\|\rho-\bar\rho_0\|_{L^2}^2,
\end{align}
due to
\begin{align*}
\|P-\bar P\|_{L^3}\le C\|P-\bar P\|_{L^\infty}^\frac13\bigg(\int|P-\bar P|^2dx\bigg)^\frac13\le C(\hat{\rho})\|\rho-\bar\rho_0\|_{L^2}^\frac23.
\end{align*}
Plugging \eqref{w31} into \eqref{w30}, we arrive at
\begin{align}\label{lwz3}
\frac{d}{dt}\Psi(t)+\frac12\|\sqrt{\rho}\dot{u}\|_{L^2}^2
\le C\big(\|\nabla u\|_{L^2}^2+\|\nabla u\|_{L^2}^6+\|\rho-\bar\rho_0\|_{L^2}^2\big)
\end{align}
due to $\|\nabla u\|_{L^2}^4\leq \|\nabla u\|_{L^2}^2+\|\nabla u\|_{L^2}^6$.
In view of \eqref{a8}, \eqref{a9}, \eqref{w1}, and \eqref{w8}, it is clear that
\begin{align}\label{w33}
\sqrt{\rho}\dot u\in L_{\loc}^2(0,\infty; L^2), \quad
 \Psi(t)\in C[0, \infty),
\end{align}
which along with \eqref{w1} and \eqref{w8} implies that
\begin{align}\label{zzz}
&\int_0^\infty\int\Big[\frac12\big((2\mu+\lambda)(\divv u)^2+\mu|\curl u|^2\big)
-(P-\bar P)\divv u+D_2|\rho-\bar\rho_0|^2\Big]dxdt<\infty.
\end{align}

Next, for $t\ge 0$, we choose a positive constant $D_3$ suitably large and define the temporal energy functional
\begin{align}\label{w35}
\cm_2(t)&=D_3\cm_1(t)+\int\Big[\frac12\big((2\mu+\lambda)(\divv u)^2+\mu|\curl u|^2\big)-(P-\bar P)\divv u+D_2|\rho-\bar\rho_0|^2\Big](t)dx.
\end{align}
Noticing that $\cm_2(t)$ is equivalent to $\|(\rho-\bar\rho_0, \sqrt\rho u, \nabla u)(t)\|_{L^2}^2$ provided we choose $D_2$ and $D_3$ large enough. Fix a positive constant $\delta_1$ that may be small. In light of \eqref{w1}
and \eqref{zzz}, we conclude that there exists a positive constant $T_1>0$ such that
\begin{align}
\cm_2(T_1)<\delta_1.
\end{align}

Now we claim that, for any $t\ge T_1$,
\begin{align}\label{b35}
\Psi(t)+D_2\|(\rho-\bar\rho_0)(\cdot,t)\|_{L^2}^2<2\delta_1.
\end{align}
This implies that, for any $t\ge T_1$,
\begin{align}\label{b36}
\int\big[\mu|\nabla u|^2+(\mu+\lambda)(\divv u)^2\big](t)dx<4\delta_1.
\end{align}
Let $\delta_1$ be small enough, then one deduces from \eqref{w15} and \eqref{b35} that, for any $t\ge T_1$,
\begin{align}\label{b37}
\frac{d}{dt}\cm_2(t)+\frac{\cm_2(t)}{D_3}+\frac{\|\sqrt\rho\dot{u}\|_{L^2}^2}{D_3}\le 0.
\end{align}
Integrating \eqref{b37} with respect to $t$ over $[T_1, \infty)$ implies \eqref{w17}.

It remains to prove \eqref{b35}. If \eqref{b35} were false, taking advantage of \eqref{w33}, there exists a time $T_2>T_1$ such that
\begin{align}\label{b38}
\Psi(T_2)+D_2\|(\rho-\bar\rho_0)(\cdot,T_2)\|_{L^2}^2=2\delta_1.
\end{align}
Taking a minimal value of $T_2$ satisfying \eqref{b38}, then \eqref{b35} holds for any $T_1\le t<T_2$. Integrating \eqref{b37} from
$T_1$ to $T_2$ implies that
\begin{align*}
\cm_2(T_2)\le \cm_2(T_1)<\delta_1,
\end{align*}
which contradicts to \eqref{b38}. Hence, \eqref{b35} holds for any $t\ge T_1$.
\end{proof}

\begin{lemma}
Under the assumptions of Theorem \ref{thm1}, there exist two positive constant $C_5$ and $\eta_5$, which are dependent on
 $K$, but independent of $t$, such that for any $t\ge 0$,
\begin{align}\label{w41}
\sup_{t\ge 0}\|\sqrt\rho\dot u\|_{L^2}^2
+\int_0^\infty\|\nabla\dot u\|_{L^2}^2dt\le C,
\end{align}
and
\begin{align}\label{w42}
\sup_{t\ge 0}\|\sqrt\rho\dot u\|_{L^2}^2\le C_5e^{-\eta_5t}.
\end{align}
\end{lemma}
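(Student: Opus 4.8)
The plan is to differentiate the momentum equation \eqref{w18} in time, test against $\dot u$, and estimate the resulting terms — carefully tracking the boundary contributions that do not appear in the Cauchy-problem analyses of \cite{WLZ,WZ24}. First I would apply $\dot{(\cdot)} = \partial_t + u\cdot\nabla$ (or equivalently $\partial_t$ followed by standard commutator manipulations, as in the Hoff/Huang--Li--Xin framework) to \eqref{w18}, multiply by $\dot u$, and integrate by parts. The interior terms are handled as usual: the convective commutators produce cubic terms in $\nabla u$ controlled by $\|\nabla u\|_{L^3}^3$ and $\|\nabla u\|_{L^4}^4$, which via \eqref{f12}, \eqref{d11}, \eqref{d12} and the already-established smallness of $\|\nabla u\|_{L^2}$ (from \eqref{w17}) and $\|\rho-\bar\rho_0\|_{L^2}$ (from \eqref{w1}) are absorbed into $\tfrac12\mu\|\nabla\dot u\|_{L^2}^2$ plus lower-order exponentially decaying remainders. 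The pressure-time derivative term is rewritten using \eqref{z3.26} so that $P_t$ is replaced by expressions involving $\divv u$, and then bounded by $\|\nabla u\|_{L^2}^2 + \|\sqrt\rho\dot u\|_{L^2}^2$-type quantities.

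The heart of the matter — and the step I expect to be the main obstacle — is the boundary integral $\int_{\partial\Omega} F_t(\dot u\cdot n)\,dS$ (and its companion from the vorticity piece). Here one cannot simply use $(\dot u\cdot n)|_{\partial\Omega}=0$, since that identity fails; instead, following the device recorded in \eqref{a15}--\eqref{g24}, one has $(\dot u + (u\cdot\nabla n)\times u^\perp)\cdot n = 0$ on $\partial\Omega$, so the boundary term becomes $\int_{\partial\Omega} F_t\big((u\cdot\nabla n)\times u^\perp\cdot n\big)\,dS$, i.e. a term of the form $\int_{\partial\Omega}(u\cdot\nabla n\cdot u)F_t\,dS$. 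I would integrate by parts in time to move $\partial_t$ off $F$, producing a $\frac{d}{dt}$-term (to be absorbed into the energy functional) plus $\int_{\partial\Omega}(u\cdot\nabla n\cdot u)_t\,F\,dS$ and $\int_{\partial\Omega}(\partial_t(u\cdot\nabla n\cdot u))F\,dS$; the factor $(u\cdot u)_t$ contributes $u\cdot u_t = u\cdot(\dot u - u\cdot\nabla u)$, so one gets boundary terms involving $u\cdot\dot u$ and $F$. These are controlled via the trace theorem, $\|F\|_{H^1}\le C(\|\rho\dot u\|_{L^2}+\|\nabla u\|_{L^2})$ from \eqref{f8}, and the interpolation $\|u\|_{L^4(\partial\Omega)}^2 \lesssim \|u\|_{H^1}^2$ together with the generalized Poincaré inequality \eqref{z2.5}, at the cost of a small multiple of $\|\nabla\dot u\|_{L^2}^2$ and exponentially small factors $\|\nabla u\|_{L^2}^2$. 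This is precisely the estimate referenced as \eqref{w51} in the introduction.

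Assembling everything, I would obtain an inequality of the form
\begin{align*}
\frac{d}{dt}\Big(\|\sqrt\rho\dot u\|_{L^2}^2 + \text{boundary correction}\Big) + \frac{\mu}{2}\|\nabla\dot u\|_{L^2}^2 \le C\big(\|\nabla u\|_{L^2}^2 + \|\rho-\bar\rho_0\|_{L^2}^2 + (\text{small})\|\sqrt\rho\dot u\|_{L^2}^2\big),
\end{align*}
where the correction term is bounded by $C\|\nabla u\|_{L^2}^2 + \tfrac14\|\sqrt\rho\dot u\|_{L^2}^2$ (again via \eqref{f8} and trace estimates), hence the whole left-hand functional is equivalent to $\|\sqrt\rho\dot u\|_{L^2}^2$ up to exponentially decaying quantities. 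Integrating in time over $[0,\infty)$ and using $\|\nabla\dot u\|_{L^2}^2 \ge c\|\sqrt\rho\dot u\|_{L^2}^2$ (which follows from \eqref{d11} and the $L^6$--$L^2$ interpolation, modulo the $\|\nabla u\|_{L^2}^4$ remainder) first yields the uniform bound \eqref{w41}. For \eqref{w42}, once the right-hand side is known to consist of $\|\nabla u\|_{L^2}^2 + \|\rho-\bar\rho_0\|_{L^2}^2$ — both $\le Ce^{-2\eta t}$ by \eqref{w1} and \eqref{w17} — plus a small self-term, a standard Gronwall/Lyapunov argument (exactly as in the passage from \eqref{b37} to \eqref{w17}, with a waiting time $T_1$ after which the small coefficient is genuinely small) delivers the exponential decay $\|\sqrt\rho\dot u\|_{L^2}^2 \le C_5 e^{-\eta_5 t}$.
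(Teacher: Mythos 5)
Your proposal follows essentially the same route as the paper: the material-derivative energy estimate for $\rho\dot u$, the treatment of the boundary integral $\int_{\partial\Omega}F_t(\dot u\cdot n)\,dS$ via the identity \eqref{a15}/\eqref{g24} and an integration by parts in time (yielding exactly the correction term $\int_{\partial\Omega}(u\cdot\nabla n\cdot u)F\,dS$ that the paper absorbs into the Lyapunov functional and bounds as in \eqref{w59}), and the closing Gronwall argument obtained by coupling with the lower-order inequality \eqref{b37}. The plan is correct and matches the paper's proof of \eqref{w51}--\eqref{w58} in all essential steps.
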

\begin{proof}[Proof]
Taking \eqref{f5}, we rewrite $\eqref{a1}_2$ as
\begin{align}\label{3.41}
\rho\dot{u}=\nabla F-\mu\curl{\rm curl}\,u.
\end{align}
Applying $\dot{u}^j[\partial/\partial t+\div (u\cdot)]$ to the $j$th-component of $\eqref{3.41}$,
 and then integrating the resulting equality over $\Omega$, one gets that
\begin{align}\label{w44}
\frac12\frac{d}{dt}\int\rho|\dot{u}|^2dx
&=\int\big(\dot{u}\cdot\nabla F_t+\dot{u}^j{\rm div}\,(u\partial_jF)\big)dx
-\mu\int\big(\dot{u}\cdot\curl{\rm curl}\,u_t+\dot{u}^j{\rm div}\,((\curl{\rm curl}\,u)^ju)\big)dx\nonumber\\
&\triangleq J_1+J_2.
\end{align}
We denote by $h\triangleq u\cdot(\nabla n+(\nabla n)^{tr})$ and $u^\bot\triangleq-u\times n$, then it deduces from Lemma \ref{l23} that
\begin{align}\label{w43}
&-\int_{\partial\Omega}F_t(u\cdot\nabla n\cdot u)dS\nonumber\\
&=-\frac{d}{dt}\int_{\partial\Omega}(u\cdot\nabla n\cdot u)FdS
+\int_{\partial\Omega}Fh\cdot\dot{u}dS-\int_{\partial\Omega}Fh\cdot(u\cdot\nabla u)dS\nonumber\\
&=-\frac{d}{dt}\int_{\partial\Omega}(u\cdot\nabla n\cdot u)FdS
+\int_{\partial\Omega}Fh\cdot\dot{u}dS-\int_{\partial\Omega}Fh^i(\nabla u^i\times u^\bot)\cdot ndS\nonumber\\
&=-\frac{d}{dt}\int_{\partial\Omega}(u\cdot\nabla n\cdot u)FdS
+\int_{\partial\Omega}Fh\cdot\dot{u}dS
+\int Fh^i\nabla\times u^\bot\cdot\nabla u^idx
-\int\nabla u^i\times u^\bot\cdot\nabla(Fh^i)dx\nonumber\\
&\le-\frac{d}{dt}\int_{\partial\Omega}(u\cdot\nabla n\cdot u)FdS
+C\|\nabla F\|_{L^2}\|u\|_{L^3}\|\dot{u}\|_{L^6}\nonumber\\
&\quad+C\big(\|F\|_{L^3}\|u\|_{L^6}\|\nabla\dot{u}\|_{L^2}
+\|F\|_{L^3}\|u\|_{L^6}\|\dot{u}\|_{L^2}
+\|F\|_{L^3}\|\nabla u\|_{L^2}\|\dot{u}\|_{L^6}\big)\nonumber\\
&\quad+C\big(\|\nabla u\|_{L^2}\|u\|_{L^6}^2\|\nabla F\|_{L^6}+\|\nabla u\|_{L^4}^2\|u\|_{L^6}\|F\|_{L^3}\big)\nonumber\\
&\le -\frac{d}{dt}\int_{\partial\Omega}(u\cdot\nabla n\cdot u)FdS
+C\big(\|\rho\dot{u}\|_{L^2}+\|P-\bar P\|_{L^2}+\|\nabla u\|_{L^2}+\|\nabla u\|_{L^2}^2\big)\nonumber\\
&\quad\cdot\|\nabla u\|_{L^2}\big(\|\nabla\dot{u}\|_{L^2}+\|\nabla u\|_{L^2}^2+\|\nabla u\|_{L^4}^2\big)
+C\|\nabla u\|_{L^2}^3\|\nabla F\|_{L^6}\nonumber\\
&\le -\frac{d}{dt}\int_{\partial\Omega}(u\cdot\nabla n\cdot u)FdS
+C\|\nabla u\|_{L^2}^3\|\nabla F\|_{L^6}+\delta\|\nabla\dot{u}\|_{L^2}^2\nonumber\\
&\quad+C\big(\|\sqrt{\rho}\dot{u}\|_{L^2}^2\|\nabla u\|_{L^2}^2+\|\nabla u\|_{L^2}^6+\|\nabla u\|_{L^4}^4
+\|\nabla u\|_{L^2}^2\big),
\end{align}
due to
\begin{gather*}
{\rm div}(\nabla u^i\times u^\bot)=u^\bot\cdot\curl\nabla u^i-\nabla u^i\cdot\curl u^\bot=-\nabla u^i\cdot\curl u^\bot,\\
\|\dot{u}\|_{L^6}\le C\big(\|\nabla\dot{u}\|_{L^2}+\|\nabla u\|_{L^2}^2\big),\\
\|\sqrt\rho\dot u\|_{L^2}\le C\|\dot u\|_{L^2}\le C\|\nabla\dot u\|_{L^2},
\\
\|\nabla F\|_{L^6}\le C\|\rho\dot u\|_{L^6}\le C\|\dot u\|_{L^6}
\le C(\|\nabla\dot u\|_{L^2}+\|\nabla u\|_{L^2}^2).
\end{gather*}

Thus, it follows from integration by parts, H\"older's inequality, Gagliardo--Nirenberg inequality, \eqref{a15}, \eqref{f8}, and \eqref{z2.5} that
\begin{align}\label{lwz1}
J_1&=\int\big(\dot{u}\cdot\nabla F_t+\dot{u}^j\pa_i(\pa_jFu^i)\big)dx\nonumber\\
&=\int_{\pa\Omega}F_t\dot u\cdot ndS-\int F_t\divv\dot udx-\int u^i\pa_i\dot u^j\partial_jFdx\nonumber\\
&=-\int_{\pa\Omega}F_t(u\cdot\nabla n\cdot u)dS-(2\mu+\lambda)\int(\divv\dot u)^2dx
+(2\mu+\lambda)\int\divv\dot u\nabla u:\nabla udx\nonumber\\
&\quad-\gamma\int P\divv\dot u\divv udx+\int\divv\dot uu\cdot\nabla Fdx-\int u^i\pa_i\dot u^j\partial_jFdx
\nonumber\\
&\le -\int_{\pa\Omega}F_t(u\cdot\nabla n\cdot u)dS-(2\mu+\lambda)\|\divv\dot u\|_{L^2}^2+\delta\|\nabla\dot u\|_{L^2}^2
+C\|\nabla u\|_{L^4}^4\nonumber\\
&\quad+C\|u\|_{L^6}^2\|\nabla F\|_{L^3}^2+C\|\nabla u\|_{L^2}^2\nonumber\\
&\le -\int_{\pa\Omega}F_t(u\cdot\nabla n\cdot u)dS-(2\mu+\lambda)\|\divv\dot u\|_{L^2}^2+\delta\|\nabla\dot u\|_{L^2}^2
+C\|\nabla u\|_{L^4}^4\nonumber\\
&\quad+C\|\nabla u\|_{L^2}^2\|\nabla F\|_{L^2}\|\nabla F\|_{L^6}
+C\|\nabla u\|_{L^2}^2\nonumber\\
&\le -\int_{\pa\Omega}F_t(u\cdot\nabla n\cdot u)dS-(2\mu+\lambda)\|\divv\dot u\|_{L^2}^2+\delta\|\nabla\dot u\|_{L^2}^2
\nonumber\\
&\quad+C\|\nabla u\|_{L^4}^4
+C\|\nabla u\|_{L^2}^2\|\nabla F\|_{L^2}\|\nabla F\|_{L^6}
+C\|\nabla u\|_{L^2}^2\nonumber\\
&\le -\frac{d}{dt}\int_{\pa\Om}(u\cdot\na n\cdot u)FdS-(2\mu+\lambda)
\|\divv\dot u\|_{L^2}^2+\delta\|\nabla\dot u\|_{L^2}^2
\nonumber\\
&\quad
+C\big(\|\sqrt{\rho}\dot{u}\|_{L^2}^2\|\nabla u\|_{L^2}^2+\|\nabla u\|_{L^2}^6+\|\nabla u\|_{L^4}^4+\|\nabla u\|_{L^2}^2\big)\nonumber\\
&\quad+C(\|\na u\|_{L^2}^3+\|\na u\|_{L^2}^2\|\na F\|_{L^2})\|\na F\|_{L^6}\nonumber\\
&\le -\frac{d}{dt}\int_{\pa\Om}(u\cdot\na n\cdot u)FdS-(2\mu+\lambda)
\|\divv\dot u\|_{L^2}^2+\delta\|\nabla\dot u\|_{L^2}^2
\nonumber\\
&\quad
+C\big(\|\sqrt{\rho}\dot{u}\|_{L^2}^2\|\nabla u\|_{L^2}^2+\|\nabla u\|_{L^2}^6+\|\nabla u\|_{L^4}^4+\|\nabla u\|_{L^2}^2\big)
+C\|\na u\|_{L^2}^4\|\sqrt\r\dot u\|_{L^2}^2,
\end{align}
where we have used
\begin{align*}
\|\na u\|_{L^2}^3\|\na F\|_{L^6}
&\le \|\na u\|_{L^2}^3(\|\na \dot u\|_{L^2}+\|\na u\|_{L^2}^2)\nonumber\\
&\le \delta\|\na\dot u\|_{L^2}^2+C\|\na u\|_{L^2}^6+C\|\na u\|_{L^2}^4\notag \\
\|\na u\|_{L^2}^2\|\na F\|_{L^2}\|\na F\|_{L^6}
&\le C\|\na u\|_{L^2}^2\|\sqrt\r\dot u\|_{L^2}(\|\na\dot u\|_{L^2}+\|\na u\|_{L^2}^2)\nonumber\\
&\le \delta\|\na\dot u\|_{L^2}^2+C\|\na u\|_{L^2}^4\|\sqrt\r\dot u\|_{L^2}^2
+C\|\na u\|_{L^2}^6+C\|\na u\|_{L^2}^4,
\end{align*}
and
\begin{align*}
F_{t}+u\cdot\nabla F&=(2\mu+\lambda)\divv u_t-P_t+(2\mu+\lambda)u\cdot\nabla\divv u-u\cdot\nabla P\nonumber\\
&=(2\mu+\lambda)\divv\dot{u}-(2\mu+\lambda)\divv(u\cdot\nabla u)+(2\mu+\lambda)u\cdot\nabla\divv u-(P_t+u\cdot\nabla P)\nonumber\\
&=(2\mu+\lambda)\divv\dot{u}-(2\mu+\lambda)\partial_i(u^j\partial_ju^i)
+(2\mu+\lambda)u^j\partial_j\partial_iu^i
+\gamma P\divv u\nonumber\\
&=(2\mu+\lambda)\divv \dot{u}-(2\mu+\lambda)\partial_iu^j\partial_ju^i
+\gamma P\divv u.
\end{align*}

By direct calculations, we have
\begin{align}\label{lwz2}
J_2&=-\mu\int \dot{u}\cdot(\curl{\rm curl}\,u_t)dx-\mu\int\dot{u}\cdot(\curl{\rm curl}\,u){\rm div}\,udx
-\mu\int u^i\dot{u}\cdot\curl(\partial_i{\rm curl}\,u)dx\nonumber\\
&=-\mu\int |{\rm curl}\,\dot{u}|^2dx+\mu\int {\rm curl}\,\dot{u}\cdot{\rm curl}\,(u\cdot\nabla u)dx+\mu\int ({\rm curl}\,u\times\dot{u})\cdot\nabla{\rm div}\,udx\nonumber\\
&\quad
-\mu\int {\rm div}\,u({\rm curl}\,u\cdot{\rm curl}\,\dot{u})dx-\mu\int u^i{\rm div}\,(\partial_i{\rm curl}\,u\times\dot{u})dx
-\mu\int u^i\partial_i{\rm curl}\,u\cdot{\rm curl}\,\dot{u}dx\nonumber\\
&=-\mu\int |{\rm curl}\,\dot{u}|^2dx+\mu\int {\rm curl}\,\dot{u}\partial_iu\times\nabla u^idx+\mu\int ({\rm curl}\,u\times\dot{u})\cdot\nabla{\rm div}\,udx\nonumber\\
&\quad
-\mu\int {\rm div}\,u({\rm curl}\,u\cdot{\rm curl}\,\dot{u})dx-\mu\int u^i{\rm div}\,(\partial_i{\rm curl}\,u\times\dot{u})dx\nonumber\\
&=-\mu\int |{\rm curl}\,\dot{u}|^2dx+\mu\int {\rm curl}\,\dot{u}\partial_iu\times\nabla u^idx+\mu\int ({\rm curl}\,u\times\dot{u})\cdot\nabla{\rm div}\,udx
\nonumber\\
&\quad-\mu\int {\rm div}\,u({\rm curl}\,u\cdot{\rm curl}\,\dot{u})dx-\mu\int  u\cdot\nabla{\rm div}\,({\rm curl}\,u\times\dot{u})dx
+\mu\int  u^i{\rm div}\,({\rm curl}\,u\times\partial_i\dot{u})dx\nonumber\\
&=-\mu\int |{\rm curl}\,\dot{u}|^2dx+\mu\int {\rm curl}\,\dot{u}\nabla_iu\times\nabla u^idx\nonumber\\
&\quad-\mu\int {\rm div}\,u({\rm curl}\,u\cdot{\rm curl}\,\dot{u})dx
-\mu\int \nabla u^i\cdot({\rm curl}\,u\times\partial_i\dot{u})dx\nonumber\\
&\le \delta \|\nabla\dot{u}\|_{L^2}^2+C \|\nabla u\|_{L^4}^4-\mu \|{\rm curl}\,\dot{u}\|_{L^2}^2,
\end{align}
due to
\begin{align*}
\curl(\dot{u}{\rm div}\,u)&={\rm div}\,u{\rm curl}\,\dot{u}+\nabla{\rm div}\,u\times\dot{u},\\
{\rm div}\,(\partial_i{\rm curl}\,u\times\dot{u})&=\dot{u}\cdot\curl(\partial_i{\rm curl}\,u)-\partial_i{\rm curl}\,u\cdot\curl\dot{u},\\
\int{\rm curl}\,u\cdot(\nabla{\rm div}\,u\times\dot{u})dx&=-\int({\rm curl}\,u\times\dot{u})\cdot\nabla{\rm div}\,udx,\\
\int{\rm curl}\,\dot{u}\cdot{\rm curl}\,(u\cdot\nabla u)dx&=\int{\rm curl}\,\dot{u}\cdot{\rm curl}\,(u^i\partial_iu)dx \\
& =\int{\rm curl}\,\dot{u}\big(u^i{\rm curl}\,\partial_iu+\partial_iu\cdot\nabla u^i\big)dx\nonumber\\
&=\int u^i\partial_i{\rm curl}\,u\cdot{\rm curl}\,\dot{u}dx+\int{\rm curl}\,\dot{u}\partial_iu\times\nabla u^idx,
\end{align*}
and
\begin{align*}
\int u\cdot\nabla{\rm div}\,({\rm curl}\,u\times\dot{u})dx
&=\int u^i\partial_i{\rm div}\,({\rm curl}\,u\times\dot{u})dx\nonumber\\
&=\int u^i\partial_i(\dot{u}\cdot\curl{\rm curl}\,u-{\rm curl}\,\dot{u}\cdot{\rm curl}\,u)dx\nonumber\\
&=\int u^i(\dot{u}\cdot\partial_i\curl{\rm curl}\,u-{\rm curl}\,\dot{u}\cdot\partial_i{\rm curl}\,u)dx\nonumber\\
&\quad+\int u^i(\partial_i\dot{u}\cdot\curl{\rm curl}\,u-\partial_i{\rm curl}\,\dot{u}\cdot{\rm curl}\,u)dx\nonumber\\
&=\int u^i{\rm div}\,(\partial_i{\rm curl}\,u\times\dot{u})dx+\int u^i{\rm div}\,({\rm curl}\,u\times\partial_i\dot{u})dx.
\end{align*}

Plugging \eqref{lwz1} and \eqref{lwz2} into \eqref{w44}, we obtain after choosing $\delta$ suitably small that
\begin{align}\label{w51}
&\frac{d}{dt}\bigg(\|\sqrt\r\dot u\|_{L^2}^2+\int_{\pa\Om}(u\cdot\na n\cdot u)FdS
\bigg)+\|\na\dot u\|_{L^2}^2\nonumber\\
&\le C\big(\|\na u\|_{L^2}^4\|\sqrt\r\dot u\|_{L^2}^2+\|\sqrt{\rho}\dot{u}\|_{L^2}^2\|\nabla u\|_{L^2}^2+\|\nabla u\|_{L^2}^6+\|\nabla u\|_{L^4}^4
+\|\nabla u\|_{L^2}^2\big)\nonumber\\
&\le C\big(\|\sqrt\rho\dot u\|_{L^2}^2+\|\na u\|_{L^2}^2\big)\|\sqrt\rho\dot u\|_{L^2}^2+C\big(\|\nabla u\|_{L^2}^2+\|\rho-\bar\rho_0\|_{L^2}^2\big),
\end{align}
due to
\begin{align}
\|\na u\|_{L^4}^4&\le C\|\r\dot u\|_{L^2}^3\big(\|\na u\|_{L^2}+\|P-\bar P\|_{L^2}\big)+C\big(\|\na u\|_{L^2}+\|P-\bar P\|_{L^4}\big)^4\nonumber\\
&\le C\big(\|\sqrt\r\dot u\|_{L^2}^4+\|\na u\|_{L^2}^4+\|\rho-\bar\rho_0\|_{L^2}^2\big).
\end{align}
It follows from the trace theorem, \eqref{z2.5}, and \eqref{w17} that
\begin{align}\label{w59}
\Big|\int_{\partial\Omega}(u\cdot\nabla n\cdot u)FdS\Big|
&\le C\||u|^2|F|\|_{W^{1,1}}\le C\|\nabla u\|_{L^2}^2\|F\|_{H^1}\nonumber\\
&\le \frac{1}{2}\|\sqrt{\rho}\dot{u}\|_{L^2}^2
+C\big(\|\nabla u\|_{L^2}^4+\|\nabla u\|_{L^2}^2\big)\nonumber\\
&\le \f12\|\sqrt\r\dot u\|_{L^2}^2+C\|\nabla u\|_{L^2}^2.
\end{align}

Choose a positive constant $D_4$ and define a temporal energy functional
\begin{align*}
\cm_3(t)\triangleq D_5\cm_2(t)+\|\sqrt{\rho}\dot{u}(\cdot,t)\|_{L^2}^2
+\int_{\partial\Omega}(u\cdot\nabla n\cdot u)FdS
+D_4\|(\rho-\bar\rho_0)(\cdot,t)\|_{L^2}^2.
\end{align*}
Noting that $\cm_3(t)$ is equivalent to $\|(\rho-\bar\rho_0, \nabla u,
\sqrt{\rho}\dot u)\|_{L^2}^2$ provided $D_4$ and $D_5$ are chosen large enough. In light
 of \eqref{b37}, we get that, for any $t\ge 0$,
\begin{align}\label{w58}
\frac{d}{dt}\cm_3(t)+\frac{\cm_3(t)}{D_5}+\frac{\mu\|\nabla\dot u\|_{L^2}^2}{D_5}\le 0.
\end{align}
Integrating \eqref{w58} with respect to $t$ over $[0, \infty)$ together with \eqref{w59} leads to \eqref{w42}.
\end{proof}

\begin{lemma}\label{l34}
Under the assumptions of Theorem \ref{thm1}, if additionally $\inf\limits_{x\in\Omega}\rho_0(x)\ge \rho_*>0$, then there exist two positive constants $C_6$ and $\eta_6$, which are dependent on
$K$, but independent of $t$, such that for any $t\ge 0$,
\begin{align}\label{w61}
\|(\rho-\bar\rho_0)(\cdot, t)\|_{L^\infty}\le C_6e^{-\eta_6t}.
\end{align}
\end{lemma}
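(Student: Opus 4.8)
The plan is to integrate the continuity equation along Lagrangian trajectories and to exploit the damping carried by the pressure once $\divv u$ is rewritten through the effective viscous flux $F$ from \eqref{f5}. Since $(u\cdot n)|_{\partial\Omega}=0$, the flow map $x\mapsto X(t;x)$ solving $\dot X=u(X,t)$, $X(0;x)=x$, is a bijection of $\overline\Omega$ onto itself for each $t\ge0$, and along each trajectory $\eqref{a1}_1$ reads
\begin{align*}
\frac{d}{dt}\rho(X(t),t)=-\rho\,\divv u=-\frac{\rho}{2\mu+\lambda}\big(F+\rho^\gamma-\bar P\big).
\end{align*}
The key structural fact is that $\bar P(t)=\overline{\rho^\gamma}\ge\bar\rho^\gamma=\bar\rho_0^\gamma\ge\rho_*^\gamma>0$ by Jensen's inequality, so $-\frac{\rho}{2\mu+\lambda}(\rho^\gamma-\bar P)$ acts as a restoring force toward $\bar\rho_0$, while $F$ and the average correction $\bar\rho_0^\gamma-\bar P$ turn out to be exponentially small in time. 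As a preliminary observation, Lemma \ref{l24} together with the decays \eqref{w1}, \eqref{w17}, \eqref{w42} (and $\|P-\bar P\|_{L^2}\le C\|\rho-\bar\rho_0\|_{L^2}$, $\|\rho\dot u\|_{L^2}\le\hat\rho^{1/2}\|\sqrt\rho\dot u\|_{L^2}$) yields $\|F\|_{L^2}+\|F\|_{L^6}\le Ce^{-\eta_0t}$ for some $\eta_0>0$.

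First I would prove the uniform positive lower bound $\rho\ge\underline\rho>0$. Put $y\triangleq\rho^{-\gamma}$; the trajectory identity gives
\begin{align*}
\frac{dy}{dt}\le-\frac{\gamma}{2\mu+\lambda}\big(\bar\rho_0^\gamma-\|F\|_{L^\infty}\big)\,y+\frac{\gamma}{2\mu+\lambda}.
\end{align*}
By \eqref{f8}, \eqref{d11} and $\rho\le\hat\rho$ one has $\|\nabla F\|_{L^6}\le C(\|\nabla\dot u\|_{L^2}+1)$, so the interpolation \eqref{f2} with $q=r=6$ and the preliminary observation give $\|F(t)\|_{L^\infty}\le Ce^{-\eta_0t/2}(\|\nabla\dot u(t)\|_{L^2}^{1/2}+1)$; H\"older's inequality in $t$ and \eqref{w41} then yield $M\triangleq\int_0^\infty\|F(t)\|_{L^\infty}\,dt<\infty$. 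Gronwall's inequality applied to the $y$--equation (using $\int_s^t(\bar\rho_0^\gamma-\|F\|_{L^\infty})\,d\tau\ge\bar\rho_0^\gamma(t-s)-M$) gives $y(t)\le e^{\gamma M/(2\mu+\lambda)}(\rho_*^{-\gamma}+\bar\rho_0^{-\gamma})$ for all $t\ge0$, hence $\rho\ge\underline\rho>0$ with $\underline\rho$ depending only on $\rho_*$, $\hat\rho$ and $K$.

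With $\underline\rho\le\rho\le\hat\rho$ in hand, I would run the damping argument for $w\triangleq\rho-\bar\rho_0$. Writing $\rho^\gamma-\bar\rho_0^\gamma=\Theta(\rho)w$ with $\Theta(\rho)\in[\gamma\underline\rho^{\gamma-1},\gamma\hat\rho^{\gamma-1}]$ (mean value theorem), the trajectory identity becomes, along each trajectory, the linear damped ODE
\begin{align*}
\frac{dw}{dt}=-a(t)w+g(t),\qquad a(t)=\frac{\rho\,\Theta(\rho)}{2\mu+\lambda}\ge\kappa\triangleq\frac{\gamma\underline\rho^{\gamma}}{2\mu+\lambda}>0,
\end{align*}
with $g(t)=-\frac{\rho}{2\mu+\lambda}\big(F(X(t),t)+\bar\rho_0^\gamma-\bar P(t)\big)$. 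Here $|\bar\rho_0^\gamma-\bar P(t)|\le C\|\rho-\bar\rho_0\|_{L^2}\le Ce^{-\eta_0t}$ by \eqref{w1}, while \eqref{w58} furnishes the localized bound $\int_t^{t+1}\|\nabla\dot u\|_{L^2}^2\,ds\le Ce^{-\eta_0t}$, so the same interpolation as above gives $\int_t^{t+1}\|F(s)\|_{L^\infty}\,ds\le Ce^{-\beta t}$ for some $\beta>0$. A Duhamel representation together with a dyadic-in-time summation then produces
\begin{align*}
\big|\rho(X(t),t)-\bar\rho_0\big|\le e^{-\kappa t}\big|\rho_0(X(0))-\bar\rho_0\big|+\int_0^t e^{-\kappa(t-s)}|g(s)|\,ds\le C_6e^{-\eta_6t}
\end{align*}
for any $\eta_6\in(0,\min\{\kappa,\beta,\eta_0\})$, uniformly over trajectories; since $\rho(\cdot,t)\in C(\overline\Omega)$ and the flow is onto $\Omega$, taking the supremum over $x$ gives \eqref{w61}.

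The main obstacle is controlling the forcing $\|F\|_{L^\infty}$ in the density ODE: it is a borderline quantity that cannot be dominated by $\|F\|_{H^1}$ in three dimensions, and $\|\nabla\dot u\|_{L^2}$ is only available in $L^2_t$ (or in the localized exponential form coming from \eqref{w58}), not pointwise. The resolution is the interpolation $\|F\|_{L^\infty}\le C\|F\|_{L^6}^{1/2}\|\nabla F\|_{L^6}^{1/2}+C\|F\|_{L^2}$ from Lemma \ref{l21}, combined with the exponential decays of $\|F\|_{L^2}$, $\|F\|_{L^6}$, $\|\nabla u\|_{L^2}$, $\|\sqrt\rho\dot u\|_{L^2}$ already proved and with a H\"older estimate in the time variable, so that only the integral information on $\|\nabla\dot u\|_{L^2}$ is ever used. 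Everything else --- the two Gronwall/Duhamel estimates and the passage from trajectories to the $L^\infty$ bound --- is routine.
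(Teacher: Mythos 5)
Your argument is correct and reaches the conclusion by the same overall strategy as the paper: first show $\int_0^\infty\|F\|_{L^\infty}\,dt<\infty$ via the interpolation $\|F\|_{L^\infty}^2\le C\|F\|_{L^6}\|\nabla F\|_{L^6}$, the bounds $\|\nabla F\|_{L^6}\le C\|\rho\dot u\|_{L^6}\le C(\|\nabla\dot u\|_{L^2}+\|\nabla u\|_{L^2}^2)$, the already-established exponential decays, and H\"older in time against \eqref{w41}; then obtain a uniform positive lower bound for $\rho$; then treat \eqref{a1}$_1$ along trajectories as a damped ODE for $\rho-\bar\rho_0$ and close with Duhamel. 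The one place where you genuinely diverge is the lower bound on the density. The paper argues by contradiction: it assumes the density first drops to a small level $c_0$ at some $T_2\ge T_1$, tracks the trajectory until the first later time $T_3$ at which it reaches $c_0/2$, and integrates \eqref{3.52} over $[T_2,T_3]$ to contradict the smallness of $\int_{T_1}^{\infty}\|F\|_{L^\infty}\,dt$. You instead set $y=\rho^{-\gamma}$, observe via Jensen's inequality that $\bar P=\overline{\rho^\gamma}\ge\bar\rho_0^\gamma$, and run a single Gronwall estimate, obtaining the explicit bound $y\le e^{\gamma M/(2\mu+\lambda)}(\rho_*^{-\gamma}+\bar\rho_0^{-\gamma})$. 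This is cleaner and quantitative, whereas the paper's contradiction argument avoids the Jensen observation at the cost of a threshold/minimal-time construction. A further small improvement on your side: you correctly account for the discrepancy $\bar\rho_0^\gamma-\bar P(t)$ in the forcing of the trajectory ODE and bound it by $C\|\rho-\bar\rho_0\|_{L^2}\le Ce^{-\eta_0 t}$, a term that the paper's equation \eqref{3.52} silently drops (it tacitly identifies $\bar P$ with $\bar\rho_0^\gamma$). The final decay step is equivalent in both treatments: you use a linear damped ODE with a unit-interval summation of $\int_t^{t+1}\|F\|_{L^\infty}\,ds$, while the paper multiplies by $\varrho$ and splits the Duhamel integral at $t/2$; both only ever use $\|\nabla\dot u\|_{L^2}$ through its time integral, which is the essential point.
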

\begin{proof}
Denote by $\vr\triangleq\rho-\bar\rho_0$, owing to $\int Fdx=0$,
then it follows from \eqref{f2}, Lemmas \ref{l24}, \ref{xu}, \eqref{w42}, \eqref{w1}, and \eqref{w17} that
\begin{align}\label{3.53}
\|F\|_{L^\infty}^2&\le C\|F\|_{L^{6}}\|\nabla F\|_{L^6}\nonumber\\
&\le C\big(\|\nabla u\|_{L^2}+\|\sqrt\r\dot u\|_{L^2}+\|\vr\|_{L^2}\big)\big(\|\nabla\dot u\|_{L^2}+\|\nabla u\|_{L^2}^2\big)\nonumber\\
&\le C\big(\|\nabla u\|_{L^2}+\|\sqrt\r\dot u\|_{L^2}+\|\vr\|_{L^2}\big)\|\na\dot u\|_{L^2}+C\big(\|\nabla u\|_{L^2}^2+\|\sqrt\r\dot u\|_{L^2}^2+\|\nabla u\|_{L^2}^4+\|\vr\|_{L^2}^2\big)\nonumber\\
&\le C(1+\|\na\dot u\|_{L^2})e^{-\eta_1t},
\end{align}
which along with \eqref{w41} implies that, for any $\delta>0$, there exists a positive constant $T_1$ such that
\begin{equation}\label{3.51}\int_{T_1}^\infty
\|F\|_{L^\infty}dt\le \delta.
\end{equation}

Let $X(t, y)$ be the particle path given by
\begin{align}\label{lz1}
\begin{cases}
\f{d}{dt}X(t, y)=u(t, X(t, y)),\\
X(0, y)=y,
\end{cases}
\end{align}
then we rewrite the mass conservation equation \eqref{a1}$_1$ as
\begin{align}\label{gg42}
\frac{\mathrm{d}}{\mathrm{d}t}\ln \rho=-\divv  u.
\end{align}
By the definition of $F$ in \eqref{f5}, \eqref{a9}, and \eqref{3.53}, we have
\begin{align*}
\rho(t, x)\ge \rho_0\text{e}^{{-\int_0^t}\|\divv  u\|_{L^\infty}\mathrm{d}\tau}\ge \rho_*\text{e}^{{-\int_0^t}C\|(P-\bar P,F)\|_{L^\infty}\mathrm{d}\tau}\ge \rho_*\text{e}^{-C(t+1)}.
\end{align*}

Now we claim that there exists a positive constant $c_0$ such that
\begin{equation}
\rho(t, x)\ge c_0.\label{3.50}
\end{equation}
If \eqref{3.50} were false, there exists a time $T_2\ge T_1$ such
that $0<c_0=\rho({ x}(T_2),T_2)\le \big(\frac{\bar\rho_0^\gamma}{2}\big)^\frac{1}{\gamma}$, then
choose a minimal value of $T_3>T_2$ such that $\rho({ x}(T_3),T_3)=\frac{c_0}{2}$.
Thus, $\rho({ x}(t),t)\in[\frac{c_0}{2},c_0]$ for $t\in[T_2,T_3]$.
one can rewrite $\eqref{a1}_1$ as
\begin{align}\label{3.52}
\f{d}{dt}\vr(t, X(t, y))+\frac{1}{2\mu+\lambda}\big(\rho(\r^\gamma-\bar\rho_0^\gamma)\big)(t, X(t, y))=
-\frac{1}{2\mu+\lambda}\r F(t, X(t, y)).
\end{align}
Integrating \eqref{3.52} along particle
trajectories from  $T_2$ to $T_3$,
abbreviating $\rho(t, X(t, y))$ by $\rho(t)$ for convenience, we get that
\begin{equation}\label{b3.53}
(2\mu+\lambda)(\rho-\bar\rho_0)\Big{|}^{T_3}_{T_2}+\int^{T_3}_{T_2}\rho(\rho^\gamma-\bar\rho_0^\gamma)\mathrm{d}\tau
=-\int^{T_3}_{T_2}\r F(t, X(\tau, y))\mathrm{d}\tau.
\end{equation}
We deal with the terms on the left-hand side of \eqref{b3.53} as follows
\begin{align}
(2\mu+\lambda)(\rho-\bar\rho_0)\Big{|}^{T_3}_{T_2}+\int^{T_3}_{T_2}\rho(\rho^\gamma-\bar\rho_0^\gamma)\mathrm{d}\tau
&\le -\frac{c_0(2\mu+\lambda)}{2}+\int^{T_3}_{T_2}\frac{c_0}{2}
\bigg(\frac{\bar\rho_0^\gamma}{2}-\bar\rho_0^\gamma\bigg)\mathrm{d}\tau\nonumber
\\&= -\frac{c_0(2\mu+\lambda)}{2}-\frac{c_0\bar\rho_0^\gamma}{4}(T_3-T_2).\label{3.54}
\end{align}
By \eqref{3.51}, we estimate the term on the right-hand side of \eqref{b3.53} as follows
\begin{align}\label{3.55}
-\int^{T_3}_{T_2}\r F(t, X(\tau, y))\mathrm{d}\tau\ge -c_0\delta.
\end{align}
Substituting \eqref{3.54} and \eqref{3.55} into \eqref{b3.53}, we obtain that
\begin{align*}
-\frac{c_1(2\mu+\lambda)}{2}-\frac{c_1\bar\rho_0^\gamma}{4}(T_3-T_2)\ge -c_0\delta,
\end{align*}
which is impossible if $\delta$ is small enough. The proof of \eqref{3.50} is finished.

Multiplying \eqref{3.52} by $\vr$ and using \eqref{3.50} and \eqref{w9}, we obtain that there exist two positive constants $\eta_7$ and $C$ such that
\begin{align}\label{w65}
\f{d}{dt}\vr^2(t, X(t, y))+\eta_7\vr^2(t, X(t, y))\le C\|F\|_{L^\infty}^2,
\end{align}
which together with \eqref{3.53} implies that
\begin{align}\label{w68}
\f{d}{dt}\vr^2(t, X(t, y))+\eta_7\vr^2(t, X(t, y))\le C(1+\|\na\dot u\|_{L^2})e^{-\eta_1t}.
\end{align}
Integrating \eqref{w68} along particle trajectories from $0$ to $t$, we derive from H\"older's inequality and \eqref{w41} that
\begin{align*}
\|\rho-\bar\rho_0\|_{L^\infty}^2&\le Ce^{-\eta_7t}+C\int_0^te^{-\eta_7(t-\tau)}(\|\nabla\dot u\|_{L^2}+1)e^{-\eta_1\tau}d\tau\nonumber\\
&\le Ce^{-\eta_7t}+C\int_0^\frac t2e^{-\eta_7(t-\tau)}(\|\nabla\dot u\|_{L^2}+1)e^{-\eta_1\tau}d\tau
+\int_{\frac t2}^te^{-\eta_7(t-\tau)}(\|\nabla\dot u\|_{L^2}+1)e^{-\eta_1\tau}d\tau\nonumber\\
&\le Ce^{-\eta_7t}+Ce^{-\frac{\eta_7t}{2}}\bigg(\int_0^\frac t2\|\nabla\dot u\|_{L^2}^2d\tau\bigg)^\frac12
\bigg(\int_0^\frac t2e^{-2\eta_1 \tau}d\tau\bigg)^\frac12
+Ce^{-\frac{\eta_7t}{2}}\int_0^\frac t2 e^{-\eta_1\tau}d\tau
\nonumber\\
&\quad+Ce^{-\frac{\eta_1t}{2}}\int_{\frac t2}^te^{-\eta_7(t-\tau)}d\tau
+Ce^{-\frac{\eta_1t}{2}}\bigg(\int_{\frac t2}^t\|\nabla\dot u\|_{L^2}^2d\tau\bigg)^\frac12
\bigg(\int_0^\frac t2e^{-2\eta_7(t-\tau)}d\tau\bigg)^\frac12\nonumber\\
&\le C\Big(e^{-\frac{\eta_7t}{2}}+e^{-\frac{\eta_1t}{2}}\Big),
\end{align*}
from which the conclusion \eqref{w61} follows.
\end{proof}

Now we are ready to prove Theorems \ref{thm1} and \ref{thm2}.
\begin{proof}[\textbf{Proof of Theorem \ref{thm1}}]
\eqref{1.8} follows from \eqref{w1}, \eqref{w17}, and \eqref{w42}, while \eqref{b1.11} follows from \eqref{w61}.
\end{proof}
\begin{proof}
[\textbf{Proof of Theorem \ref{thm2}}] Thanks to \eqref{a9}, \eqref{w1}, \eqref{w17}, \eqref{w41}, \eqref{w42}, and \eqref{3.53}, one gets from \eqref{gg42}
and $\inf_{x\in\Omega}\rho_0(x)=0$ that
\begin{align*}
\inf_{x\in\Omega}\rho(x, t)\le \inf_{x\in\Omega}\rho_0(x)e^{\int_0^t\|\div u\|_{L^\infty}d\tau}\le
\inf_{x\in\Omega}e^{C\int_0^t(\|F\|_{L^\infty}+\|P-\bar P\|_{L^\infty})d\tau}\le \inf_{x\in\Omega}\rho_0(x)e^{C(1+t)}=0,
\end{align*}
as the desired \eqref{aa11}.
\end{proof}

\section*{Conflict of Interest}
The authors have no conflicts to disclose.

\section*{Data availability}
No data was used for the research described in the article.



\begin{thebibliography}{99}

\bibitem{APV98}
Y. Achdou, O. Pironneau, and F. Valentin, Effective boundary conditions for laminar flows over periodic rough boundaries, {\it J. Comput. Phys.}, {\bf 147} (1998), 187--218.

\bibitem{AF03}
R. A. Adams and J. J. F. Fournier, Sobolev spaces, 2nd ed., Elsevier/Academic Press, Amsterdam, 2003.

\bibitem{A2014}
J. Aramaki, $L^p$ theory for the div-curl system, {\it Int. J. Math. Anal.}, {\bf 8} (2014), 259--271.

\bibitem{B01}
E. B{\"a}nsch, Finite element discretization of the Navier--Stokes equations with a free capillary surface, {\it Numer. Math.}, {\bf 88} (2001), 203--235.

\bibitem{BJ67}
G. Beavers and D. Joseph, Boundary conditions at a naturally permeable wall, {\it J. Fluid Mech.}, {\bf 30} (1967), 197--207.

\bibitem{B10}
H. Beirao da Veiga, A challenging open problem: the inviscid limit under slip-type boundary conditions, {\it Discrete Contin. Dyn. Syst. Ser. S}, {\bf 3} (2010), 231--236.

\bibitem{BS2012}
L. Berselli and S. Spirito, On the vanishing viscosity limit of 3D Navier--Stokes equations under slip boundary conditions in general domains, {\it Comm. Math. Phys.}, {\bf316} (2012), 171--198.

\bibitem{CL23}
G. Cai and J. Li, Existence and exponential growth of global classical solutions to the compressible Navier--Stokes equations with slip boundary conditions in 3D bounded domains, \emph{Indiana Univ. Math. J.}, \textbf{72} (2023), 2491--2546.

\bibitem{CD10}
F. Charve and R. Danchin, A global existence result for the
compressible Navier--Stokes equations in the critical $L^p$
framework, {\it Arch. Ration. Mech. Anal.}, {\bf198} (2010), 233--271.

\bibitem{CMZ10}
Q. Chen, C. Miao, and Z. Zhang,
Global well-posedness for compressible Navier--Stokes equations with highly oscillating initial velocity, {\it Comm. Pure Appl. Math.}, {\bf63} (2010), 1173--1224.

\bibitem{D00}
R. Danchin, Global existence in critical spaces for compressible
Navier--Stokes equations, {\it Invent. Math.}, {\bf141} (2000), 579--614.

\bibitem{DX17}
R. Danchin and J. Xu, Optimal time-decay estimates for the compressible Navier--Stokes equations in the critical $L^p$ framework, \emph{Arch. Ration. Mech. Anal.}, \textbf{224} (2017), 53--90.

\bibitem{DUYZ07}
R. Duan, S. Ukai, Y. Yang, and H. Zhao, Optimal convergence rates for the compressible Navier--Stokes equations with potential forces, \emph{Math. Models Methods Appl. Sci.}, \textbf{17} (2007), 737--758.

\bibitem{FZZ18}
D. Fang, T. Zhang, and R. Zi, Global solutions to the isentropic compressible Navier--Stokes equations with a class of large initial data, {\it SIAM J. Math. Anal.}, {\bf50} (2018), 4983--5026.

\bibitem{FNP01}
E. Feireisl, A. Novotn\'y, and H. Petzeltov\'a, On the existence of globally defined weak solutions to the Navier--Stokes equations, \emph{J. Math. Fluid Mech.}, \textbf{3} (2001), 358--392.



\bibitem{GL00}
G. Galdi and W. Layton, Approximation of the larger eddies in fluid motions II: a model for space-filtered flow, {\it Math. Models Methods Appl. Sci.}, {\bf 10} (2000), 343--350.

\bibitem{GWY20}
J. Gao, Z. Wei, and Z. Yao, The optimal decay rate of strong solution for the compressible Navier--Stokes equations with large initial data, \emph{Phys. D}, \textbf{406} (2020), 132506.

\bibitem{H11}
B. Haspot, Existence of global strong solutions in critical spaces for barotropic viscous fluids, {\it Arch. Ration. Mech. Anal.}, {\bf 202} (2011), 427--460.

\bibitem{HHW19}
L. He, J. Huang, and C. Wang, Global stability of large solutions to the 3D compressible Navier--Stokes equations, \emph{Arch. Ration. Mech. Anal.}, \textbf{234} (2019), 1167--1222.

\bibitem{H95}
D. Hoff, Global solutions of the Navier--Stokes equations for multidimensional compressible flow with discontinuous initial data, \emph{J. Differential Equations}, \textbf{120} (1995), 215--254.

\bibitem{H952}
D. Hoff, Strong convergence to global solutions for multidimensional flows of compressible, viscous fluids with polytropic equations of state and discontinuous initial data, \textit{Arch. Ration. Mech. Anal.,} \textbf{132} (1995), 1--14.

\bibitem{HHPZ}
G. Hong, X. Hou, H. Peng, and C. Zhu, Global existence for a class of large solution to compressible Navier--Stokes equations with vacuum, \emph{Math. Ann.}, \textbf{388} (2024), 2163--2194.

\bibitem{H21}
X. Hu, Weak solutions for compressible isentropic Navier--Stokes equations in dimensions three, {\it Arch. Ration. Mech. Anal.}, {\bf242} (2021), 1907--1945.

\bibitem{HW20}
X. Hu and G. Wu, Optimal decay rates of isentropic compressible Navier--Stokes equations with discontinuous initial data, {\it J. Differential Equations}, {\bf269} (2020), 8132--8172.

\bibitem{HLX12}
X. Huang, J. Li, and Z. Xin, Global well-posedness of classical solutions with large oscillations and vacuum to the three-dimensional isentropic compressible Navier--Stokes equations, \emph{Comm. Pure Appl. Math.}, \textbf{65} (2012), 549--585.

\bibitem{JM01}
W. J{\"a}ger and A. Mikeli{\'c}, On the roughness-induced effective boundary conditions for an incompressible viscous flow, {\it J. Differential Equations}, {\bf 170} (2001), 96--122.

\bibitem{JZ01}
S. Jiang and P. Zhang, On spherically symmetric solutions of the compressible isentropic Navier--Stokes equations, \emph{Comm. Math. Phys.}, \textbf{215} (2001), 559--581.

\bibitem{JZ03}
S. Jiang and P. Zhang, Axisymmetric solutions of the 3D Navier--Stokes equations for compressible isentropic fluids, \emph{J. Math. Pures Appl.}, \textbf{82} (2003), 949--973.

\bibitem{KK02}
Y. Kagei and T. Kobayashi, On large-time behavior of solutions to the compressible Navier--Stokes equations in the half space in $\mathbb R^3$,
{\it Arch. Ration. Mech. Anal.}, {\bf 165} (2002), 89--159.

\bibitem{KK05}
Y. Kagei and T. Kobayashi, Asymptotic behavior of solutions of the compressible Navier--Stokes equations on the half space, {\it Arch. Ration. Mech. Anal.}, {\bf 177} (2005), 231--330.

\bibitem{LX19}
J. Li and Z. Xin, Global well-posedness and large time asymptotic behavior of classical solutions to the compressible Navier--Stokes equations with vacuum, \emph{Ann. PDE}, \textbf{5} (2019), Paper No. 7.

\bibitem{L98}
P.-L. Lions, Mathematical topics in fluid mechanics. Vol. 2. Compressible models, Oxford University Press, New York, 1998.

\bibitem{MN80}
A. Matsumura and T. Nishida, The initial value problem for the equations of motion of viscous and heat-conductive gases, \emph{J. Math. Kyoto Univ.}, \textbf{20} (1980), 67--104.

\bibitem{MN83}
A. Matsumura and T. Nishida, Initial boundary value problems for the equations of motion of compressible viscous and heat-conductive fluids, \emph{Comm. Math. Phys.}, \textbf{89} (1983), 445--464.


\bibitem{NS04}
A. Novotn{\'y} and I. Stra{\v{s}}kraba, Introduction to the mathematical theory of compressible flow, Oxford University Press, Oxford, 2004.

\bibitem{Padula}
M. Padula, On the exponential stability of the rest state of a viscous compressible fluid, {\it J. Math. Fluid Mech.}, {\bf 1} (1999), 62--77.

\bibitem{PSW21}
Y. Peng, X. Shi, and Y. Wu, Exponential decay for Lions-Feireisl's weak solutions to the barotropic compressible Navier--Stokes equations in 3D bounded domains, \emph{Indiana Univ. Math. J.}, \textbf{70} (2021), 1813--1831.

\bibitem{serrin1959mathematical}
J. Serrin, Mathematical principles of classical fluid mechanics, Springer--Verlag OHG, Berlin--G{\"o}ttingen--Heidelberg, 1959.

\bibitem{S82}
V. Solonikov, On the Stokes equations in domains with nonsmooth boundaries and on viscous incompressible flow with a free surface, in {\it Nonlinear Partial Differential and their Applications}, 340--423, Pitman, Boston, Mass.-London, 1982.

\bibitem{S21}
A. Suen, Existence, stability and long time behaviour of weak solutions of the three-dimensional compressible Navier--Stokes equations with potential force, {\it J. Differential Equations}, {\bf 299} (2021), 463--512.

\bibitem{WW92}
W. von Wahl, Estimating $\na u$ by $\div u$ and $\curl u$, \emph{Math. Methods Appl. Sci.}, \textbf{15} (1992), 123--143.

\bibitem{WLZ}
G. Wu, L. Yao, and Y. Zhang, Global stability and nonvanishing vacuum states of 3D compressible Navier--Stokes equations, \emph{SIAM J. Math. Anal.}, \textbf{55} (2023), 882--899.

\bibitem{WZ24}
G. Wu and X. Zhong, Long-time behavior to the 3D isentropic compressible Navier--Stokes equations, https://arxiv.org/abs/2407.17439.

\bibitem{ZLZ20}
X. Zhai, Y. Li, and F. Zhou, Global large solutions to the three dimensional compressible Navier--Stokes equations, {\it SIAM J. Math. Anal.}, {\bf52} (2020), 1806--1843.



\end{thebibliography}
\end{document}